\def\namedlabel#1#2{\begingroup
    #2%
    \def\@currentlabel{#2}%
    \phantomsection\label{#1}\endgroup
}
\title{Birational invariance of the Chow-Witt group of zero-cycles}
\author{Niels Feld}
\date{}
\newtheorem{thm}[subsubsection]{Theorem}
\newtheorem{prop}[subsubsection]{Proposition}
\newtheorem{lm}[subsubsection]{Lemma}
\newtheorem{cor}[subsubsection]{Corollary}
\theoremstyle{remark}
\newtheorem{rem}[subsubsection]{Remark}
\newtheorem{ex}[subsubsection]{Example}
\theoremstyle{definition}
\newtheorem{df}[subsubsection]{Definition}
\newtheorem{paragr}[subsubsection]{}
\numberwithin{equation}{subsubsection}
\newcommand{\cO}{\mathcal O}
\newcommand{\cL}{\mathcal L} 
\newenvironment{customthm}[1]
{\innercustomthm}
{\endinnercustomthm}
\DeclareMathOperator{\Der}{D}
\newcommand{\WCor}{\widetilde{\mathbf{Cor}}}
\newcommand{\WRatCor}{{\widetilde{\mathbf{RatCor}}}}
\newcommand{\ab}{\mathscr Ab}
\newcommand{\orsch}{\textbf{orSchm}} %category of all oriented schemes
\newcommand{\cotg}{\mathrm L}
\newcommand{\cotgb}{\tau} %associated virtual bundle
\newcommand{\detcotgb}{\omega} %associated determinant
\DeclareMathOperator\GW{GW}
\DeclareMathOperator{\Hom}{Hom}
\DeclareMathOperator{\CH}{CH}
\newcommand{\CHW}{\widetilde{\mathrm{CH}}{}}
\newcommand{\FF} {\mathbf F}
\newcommand{\ZZ} {\mathbf Z}
\newcommand{\PP} {\mathbf P}
\DeclareMathOperator{\Spec}{Spec}
\newcommand{\Id}{\operatorname{Id}}
\newcommand\res{\operatorname{res}}
\newcommand\cores{\operatorname{cores}}
\newcommand\KMW{\underline{\operatorname{K}}^{MW}}
\newcommand\kMW{\operatorname{K}^{MW}}
\newcommand{\hM}{\mathcal{M}}
\newcommand{\cohM}{M}
\begin{document}

\maketitle

\begin{abstract}
	We prove that the Chow-Witt group of zero-cycles is a birational invariant of smooth proper schemes over a base field.
\end{abstract}

\tableofcontents

\section*{Introduction}

\par The notion of \emph{Milnor-Witt cycle modules} is introduced by the author in \cite{Feld1, Fel21} over a perfect field $k$ which, after slight changes, can be generalized to more general base schemes (see \cite{BHP22} for the case of a regular base scheme, and \cite{DegliseFeldJin22} for any base schemes).

\par The main example of a Milnor-Witt cycle module is given by the Milnor-Witt K-theory $\KMW$ (see \cite{BCDFO, Feld1, Fel21,Fel21b,Fel21c} for more details). 

\par To any MW-cycle module $M$ and any $k$-scheme $X$ equipped with a line bundle $l_X$, one can associated a Rost-Schmid complex $C_*(X,M,l_X)$ whose homology groups are called the called \textit{the Chow-Witt groups with coefficient in $M$}. In particular, if $M=\KMW$, one recovers the Chow-Witt groups $\CHW_*(X,l)$ (see \cite{Fasel18bis}) which are, in some sense, a \textit{quadratic refinement} of the classical Chow group $\CH_*(X)$.

\par A well-known consequence of intersection theory is that the Chow group $\CH_0(X)$ is a birational invariant. Indeed, this was proved in full generality in characteristic $0$, and for surfaces in characteristic $p>0$ in the fundamental work of Colliot-Thélène and Coray \cite[Prop. 6.3]{CTC79}. The case of an algebraically closed base field can be found in \cite[Example 16.1.11]{Fult}, but the proof works verbatim for an arbitrary field.
%
%
% (see  \cite[Example 16.1.11]{Fult}\footnote{Actually, Fulton makes the assumption that the base field $k$ is algebraically closed, but the result is true over any field (with the same proof).}). 

\par A natural question is whether or not the birational invariance holds true for the Chow-Witt group and, more generally, of the Chow-Witt groups with coefficients in a Milnor-Witt cycle module). It is easy to see that the Chow-Witt group in \textbf{cohomological} degree zero $\CHW^0$ is a birational invariant for smooth proper $k$-scheme (see \cite[Theorem 5.6]{Fel21}). In homological degree zero, the question is more complex.
%
%\par First of all, one can deduce from the computation of the Chow-Witt groups of a split quadric that the group $\CHW_0$ is \textbf{not} a birational invariance in general (see Remark \ref{Rem_counter_example} for more details). One may explain this distinction with the classical Chow group with the fact that Chow-Witt groups $\CHW(X,l_X)$ depend on a twist by a line bundle $l_X$.

\par 
%In this paper, we define  and study oriented schemes, which are $k$-schemes $X$ with a \textit{quadratic orientation} $\cO_X \to \detcotgb_{X}$ of the canonical sheaf $\detcotgb_X$ (see Definition \ref{def_oriented_scheme}). In some sense, working with oriented schemes trivializes any problems one might have because of twists. 
Following ideas of Merkurjev \cite{KM13}, we prove that the Chow-Witt group of zero-cycles is a birational invariant for smooth proper schemes. More generally, we have:

\begin{customthm}{1}[see Theorem \ref{thm_birational_invariance}]
	The group $A_0(X,M)$ is a birational invariant of the smooth proper scheme $X$.
	\par In particular, the Chow-Witt group of zero-cycles $\CHW_0(X)$ is a birational invariant of the smooth proper scheme $X$.
\end{customthm}

\subsection*{Outline of the paper}

\par In Section \ref{section_constructions}, we explain how to build a special type of Milnor-Witt cycle module from a fix MW-module. Moreover, we define a cup product for oriented schemes.

\par In Section \ref{section_rational_corr}, we prove that the two previous constructions are compatible with each other in some sense. This allows us to define a composition of \textit{Milnor-Witt rational correspondences} and construct an associated pushforward map. Finally, we apply these results to prove that Chow-Witt group of zero-cycles is a birational invariant for smooth proper schemes.

\par In Appendix \ref{Section_recollections}, we recall the basic definitions of (cohomological) Milnor-Witt cycle modules along with the basic maps (pushforward, pullback, etc.). We then define the new class of \textit{oriented} schemes.

\subsection*{Acknowledgments}
The author deeply thanks Frédéric Déglise, Fangzhou Jin, Jean Fasel, Adrien Dubouloz, Marcello Bernardara, Marc Levine, Paul Arne \O stv\ae r, Bertrand Toën, Joana Cirici, Baptiste Calmès, Alexey Ananyevskiy.

\section*{Notations and conventions}

In this paper, schemes are noetherian and finite dimensional.
 We fix a base field\footnote{Many results of the present paper are in fact true over a more general base scheme.} $k$ and put $S=\Spec k$, and we fix a base ring of coefficients $R$. 
 If not stated otherwise, all schemes and morphisms of schemes are defined over $S$.
 A \emph{point} (resp. \emph{trait}, \emph{singular trait}) of $S$ will be a morphism of schemes $\Spec(k) \rightarrow S$ essentially of finite type and such

Conventions: a morphism $f:X \rightarrow S$ (sometime denoted by $X/S$) is:
\begin{itemize}
\item essentially of finite type
 if $f$ is the projective limit of a cofiltered system $(f_i)_{i \in I}$ of morphisms of finite type
 with affine and \'etale transition maps
 
\item lci if it is smoothable and a local complete intersection (\emph{i.e.} admits a global factorization $f=p \circ i$,
 $p$ smooth and $i$ a regular closed immersion);
\item essentially lci if it is a limit of lci morphisms with étale transition maps. 
%\fz{Cette d\'efinition est \'etrange: lci implique d\'ej\`a que $f$ est de type fini.}
%\fred{Certes, limite projective de morphismes lci avec morphismes de transition étales serait mieux.}
%\end{itemize}
%If $(S,\delta)$ is a dimensional scheme, and $X/S$ is essentially of finite type
% then $\delta$ can be extended canonically to $X$ (see e.g. \cite[1.1.7]{BD1}).
% We will simply denote by $\delta$ this extension so that $\delta(X)$ is defined.

\end{itemize}
Let $X/S$ be a scheme essentially of finite type. We put $X_{(p)}$ the set of $p$-dimensional points of $X$.

A point $x$ of $S$ is a map $x:\Spec(E) \rightarrow S$ essentially of finite type and such $E$ is a field.
 We also say that $E$ is a field over $S$.

Given a morphism of schemes $f:Y \rightarrow X$, we let $\cotg_f$ be its cotangent complex,
 an object of $\Der^b_{\mathrm{coh}}(Y)$,
 and when the latter is perfect (e.g. if $f$ is essentially lci), we let $\cotgb_f$ be its associated
 virtual vector bundle over $Y$, and by $\detcotgb_f$ the determinant of $\cotgb_f$.

 \par If not stated otherwise, $M$ is a (cohomological) Milnor-Witt cycle module, $X$ is an $S$-scheme, $l$ is a line bundle over $X$, and $p,q$ are integers.

%
%We will use the axiomatic of motivic triangulated categories (\cite{CD3})
% and its extension to motivic $\infty$-categories (as in \cite{CisCoh}, \cite{DJK}).
% The main examples used in the paper are the universal (resp. rational universal)
% version the (rational) stable homotopy category $\SH$ (resp. $\SH_\QQ$) but several variants
% are of interest.
%
%We use the following notations for twists: $\Th_X(v)$ is the Thom space of $v$, $\Gtw 1=(1)[1]$ and
% $\tw 1=(1)[2]$.

%
%\par Let $E$ be a field. We denote by $\GW(E)$ the Grothendieck-Witt ring of non-degenerate symmetric bilinear forms on $E$. For any $a\in E^*$, we denote by $\ev{a}$ the class of the symmetric bilinear form on $E$ defined by $(X,Y)\mapsto aXY$ and, for any natural number $n$, we put $n_{\epsilon}=\sum_{i=1}^n \ev{-1}^{i-1}$. Recall that, if $n$ and $m$ are two natural numbers, then $(nm)_{\epsilon}=n_{\epsilon}m_{\epsilon}$.
%

\section{Main constructions}

\label{section_constructions}

\subsection{The relative perverse homology}

%le A_0[Y,M] s'identifie au \hat H^\delta _0 f_*(M) ? :)
%Autrement dit, il s'agit du faisceau d'homotopie perverse de plus bas degré homologique, ou encore plus haut degré cohomologique. Note que pour que cette dernière formule marche, il faut sans doute inverser la caractéristique (sauf peut-être résolution des singularités affine?), alors que ta définition suivant Rost n'a pas besoin d'hypothèse!
%Mais cela suggère comme nom: "the lower direct image" ? Ou: (0th) relative perverse homology/homotopy ?
%
%

We follow \cite[§7]{Rost96}. In this section, we show that new Milnor-Witt cycle modules can be obtained from the Chow groups of the fibers of a morphism. 
\begin{paragr}
	Let $\rho:Q \to S$ be a morphism of finite type and let $M$ be a cohomological MW-cycle module over $Q$. Fix $l$ a line bundle over $Q$. For any field $F$ over $S$, denote by $Q_F=Q\times_B \Spec F$. We define an object function $A_p[\rho, M,l]$ on $\FF(S)$ by
	\begin{center}
		$A_p[\rho, M,l] = \bigoplus_{q \in \ZZ} A_p[\rho, M_q,l]$
	\end{center}
where
	\begin{center}
		$A_p[\rho, M_q,l](F)= A_p(Q_F,M_q, \detcotgb_{Q_F/Q}^{\vee}\otimes l)$.
	\end{center}
	Our aim is to show that $A_p[\rho, M,l]$ is in a natural way a Milnor-Witt cycle module over $S$.
\end{paragr}
%
%\niels{La définition cohomologique devrait être :
%	\begin{center}
%		$
%		A_0[Y,M_q] ( \kappa(x), \detcotgb_x \otimes l)
%		:=
%		A_0(Y_x,M_q,l)$
%	\end{center}
%}

\begin{paragr}
	All the properties of Milnor-Witt cycle modules except axiom $(C)$ hold already on complex level, i.e. for the groups $C_p(Q_F,M)$. Indeed, we denote by $\widehat{M}$ the object function on $\FF(B)$ defined by
	\begin{center}
		$\widehat{M}(F)=C_p(Q_F,M, \detcotgb_{Q_F/Q}^{\vee}\otimes l)= \bigoplus_{q\in \ZZ} C_p(Q_F,M_q, \detcotgb_{Q_F/Q}^{\vee})$.
	\end{center}
	We first describe its data as a Milnor-Witt cycle premodule. These will be denoted by $\widehat{\res}_{F/E}, \widehat{\cores}_{F/E}$, etc. in order to distinguish them from the data $\res_{F/E}, \cores_{F/E}$, etc. of $M$.
	\par For a morphism of fields $\phi:E\to F$, let $\overline{\phi}:Q_F \to Q_E$ be the induced map.
	\begin{enumerate}
		\item {\sc data D1} Define
		\begin{center}
			$\widehat{\res}_{F/E}:=
			\phi^!:
			C_p(Q_E,M_q, \detcotgb_{Q_E/Q}^{\vee})
			\to
			C_p(Q_F,M_q, \detcotgb_{Q_F/Q}^{\vee}).
			$
		\end{center}
		\item {\sc data D2} Assume $\phi$ finite. Define 
		\begin{center}
			$\widehat{\cores}_{F/E}:=
			\phi_*:
			C_p(Q_F,M_q, \cO_{Q_F})
			\to
			C_p(Q_E,M_q,\cO_{Q_E}).
			$
		\end{center}
		
		\item {\sc data D3}
		Simply take  the $\KMW$-module structure on $C_p(Q_F,M)$ described in \cite[§1.4 and §5.4]{DegliseFeldJin22}.
		\item 
		{\sc data D4} Denote by $\widetilde{Q}_v= Q \times_S \Spec \cO_v$, 
		the generic fiber $Q_F$ and the special fiber $Q_{\kappa(v)}$. Define
		\begin{center}
			$\widehat{\partial}_v:C_p(Q_F,M_q)
			\to
			C_{p-1}(Q_{\kappa(v)}, M_q)$
		\end{center}
		by $(\widehat{\partial_v})^x_y 
		=
		\partial^x_y$ with $\partial^x_y$ as in 
		% \cite[2.1.0??]{DFJ22} 
		\cite[§5.3.13]{DegliseFeldJin22} with respect to the scheme $\widetilde{Q_v}$.
	\end{enumerate}
\end{paragr}

\begin{thm}
	Keeping the previous notations, the object functor $\widehat{M}$ along with these data form a Milnor-Witt cycle premodule over $S$.
\end{thm}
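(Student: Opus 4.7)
The plan is to verify each Milnor-Witt cycle premodule axiom for $\widehat{M}$ by reducing it to a corresponding compatibility already established for the Rost-Schmid complex $C_\ast(Q_{(-)},M,\detcotgb^\vee\otimes l)$ in \cite{DegliseFeldJin22}. The key conceptual point is that the four data in the statement are pulled back directly from operations that already make sense on the Rost-Schmid complex of $Q$: pullback $\phi^!$, proper pushforward $\phi_*$, the $\KMW$-action, and the boundary $\partial$. In particular, the paragraph preceding the statement explicitly notes that the finiteness axiom (C) is not expected to hold on the complex level, which is precisely why the assertion is restricted to the \emph{premodule} structure.

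First I would settle the functoriality of D1 and D2. For a tower $E \to F \to F'$ of field extensions over $S$, the base-changed maps $\bar\phi:Q_F \to Q_E$ and $\bar\phi':Q_{F'} \to Q_F$ satisfy the standard identities
\[
(\bar\phi')^! \circ \bar\phi^! = (\bar\phi \circ \bar\phi')^!, \qquad \bar\phi_* \circ \bar\phi'_* = (\bar\phi \circ \bar\phi')_*,
\]
giving axioms R1a and R1b. Axiom R1c is the compatibility of $\widehat{\res}$ with $\widehat{\cores}$: given $E\to F$ arbitrary and $E\to F'$ finite, one writes the cartesian square with $Q_{F\otimes_E F'}$ in the corner and decomposes it according to the local components of $F\otimes_E F'$; the required identity is then the base-change formula relating $\phi^!$ and $\phi_*$ on the Rost-Schmid complex.

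Next I would treat the $\KMW$-module axioms (R2, R3a--R3c) by invoking the $\KMW$-action on $C_\ast(Q_F,M)$ from \cite[§1.4, §5.4]{DegliseFeldJin22}, together with its known compatibilities with pullback, pushforward, and residues. Finally, for the residue axioms it suffices to unfold the matrix description $(\widehat{\partial}_v)^x_y = \partial^x_y$ taken with respect to $\widetilde{Q}_v = Q \times_S \Spec \cO_v$: each of the axioms R3d, R3e, and the local finite-support property FD reduces componentwise to the analogous property of the Rost-Schmid differential of $\widetilde{Q}_v$, already verified in \cite{DegliseFeldJin22}.

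The step I expect to be the main obstacle is not the formal verification of any single axiom, but the careful bookkeeping of the twist $\detcotgb_{Q_F/Q}^\vee \otimes l$ across every one of them. For a morphism of fields $\phi: E \to F$ one must identify $\detcotgb_{Q_F/Q}^\vee$ with $\bar\phi^\ast \detcotgb_{Q_E/Q}^\vee$ via the transitivity triangle of cotangent complexes, and an analogous identification must be produced for the specialization map from $\widetilde{Q}_v$ to its generic and special fibres. Each premodule axiom must be stated using the canonical such identifications, and it is in maintaining the coherence of these twists, rather than in the functoriality or base-change arguments themselves, that the genuine work of the proof lies.
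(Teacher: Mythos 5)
Your proposal is correct and follows essentially the same route as the paper, whose entire proof is the one-line observation that all premodule rules for $\widehat{M}$ reduce to the rules for $M$ together with the functorial properties of the basic maps ($\phi^!$, $\phi_*$, the $\KMW$-action, and the boundary) on the Rost-Schmid complex established in \cite[§1.4 and §5.4]{DegliseFeldJin22}. Your expansion of the axiom-by-axiom reduction and your flagging of the twist bookkeeping are reasonable elaborations of that same argument rather than a different approach.
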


\begin{proof}
	All the required properties follow from the rules and axioms for $M$ and from the functorial properties studied in \cite[§1.4 and §5.4]{DegliseFeldJin22}.
	% "Compatibilities" 
	% See also \cite[Thm 7.1]{Rost96} for a similar proof).
\end{proof}

\begin{paragr}
	Now, we want to relate the differentials for the MW-cycle premodule $\widehat{M}$ to the differentials for the MW-cycle module $M$.
	\par Let $X\to S$ be a scheme over $S$ and let $\widetilde{X}= Q \times_S X$. Then for $x,y$ in $X$, there is a map
	\begin{center}
		$\widehat{\partial^x_y}:
		\widehat{M}(x)
		\to
		\widehat{M}(y)$
	\end{center}
	as in \cite[§5.3.13]{DegliseFeldJin22}.
	% \cite["2.1.0"]{DFJ22}. 
	By definition, this is a map
	
	\begin{center}
		$\widehat{\partial}^x_y:
		C_p(Q_{\kappa(x)},M)
		\to
		C_p(Q_{\kappa(y)}, M)$
	\end{center}
	between cycle groups with coefficients in $M$.
\end{paragr}

\begin{prop}
	\label{PropRost7.2}
	Let $\widetilde{x}, \widetilde{y}$ in $\widetilde{X}$ be points lying over $x,y \in X$, respectively, and assume that $\widetilde{x} \in (Q_{\kappa(x)})_{(q)}$ and $\widetilde{y}\in
	(Q_{\kappa(y)})_{(q)}$. Denote by $(\widehat{\partial^x_y})^{\widetilde{x}}_{\widetilde{y}}$ the component of $\widehat{\partial^x_y}$ with respect to $\widetilde{x}$ and $\widetilde{y}$. Then
	\begin{center}
		$(\widehat{\partial}^x_y)^{\widetilde{x}}_{\widetilde{y}}
		=
		\partial^{\widetilde{x}}_{\widetilde{y}}:
		M_q(\widetilde{x}, \detcotgb_{\widetilde{x}/S})
		\to
		M_{q-1}(\widetilde{y},\detcotgb_{\widetilde{y}/S})$.
	\end{center}
\end{prop}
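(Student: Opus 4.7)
The plan is to prove the proposition by a careful unwinding of both sides, essentially following Rost's original argument in \cite[§7]{Rost96} but paying attention to the extra twists by virtual vector bundles coming from the MW-setting. The substantive content is that a boundary computation on $\widetilde X = Q\times_S X$ can be decomposed in two equivalent ways: either as a differential on $\widehat M$-coefficients over $X$ (followed by extraction of the $\widetilde x, \widetilde y$ component), or directly as a differential on $M$-coefficients over $\widetilde X$ at the pair $(\widetilde x, \widetilde y)$.

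First, I would spell out the left-hand side. By the general definition in \cite[§5.3.13]{DegliseFeldJin22} applied to $\widehat M$, the map $\widehat{\partial}^x_y$ arises from a local analysis along $\overline{\{x\}}$ at $y$, i.e.\ from a henselization (or suitable normalization) producing one or more valuations $v$ on $\kappa(x)$ centered at $y$. Via data \textsc{D4}, each $\widehat{\partial}_v$ is by construction the residue operator $\partial^x_y$ on the Rost--Schmid complex of $\widetilde{Q_v} = Q \times_S \Spec \cO_v$ with coefficients in $M$. Thus the $(\widetilde x,\widetilde y)$-component of $\widehat{\partial}^x_y$ is, by definition, the component of the $M$-differential on $\widetilde{Q_v}$ picked out by the pair $(\widetilde x,\widetilde y)$.

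Second, I would rewrite the right-hand side in the same language. The component $\partial^{\widetilde x}_{\widetilde y}$, again via \cite[§5.3.13]{DegliseFeldJin22}, is extracted from the boundary on $C_*(\widetilde X, M)$, and by the Nisnevich-local nature of residues it depends only on the henselization of $\widetilde X$ along $\overline{\{\widetilde y\}}$ through $\widetilde x$. Since $\widetilde x$ and $\widetilde y$ lie over $x$ and $y$ respectively, this henselization embeds in $Q \times_S \Spec \cO^h_{X,y}$, which is exactly $\widetilde{Q_v}$ up to the choice of valuation. Comparing the two expressions then reduces the proposition to the compatibility of residues under passage to local schemes, which is built into the axiomatics of MW-cycle modules.

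The main obstacle will be matching the twists. On the left the coefficients $\widehat M$ are already twisted by $\detcotgb^\vee_{Q_F/Q}\otimes l$, while on the right the target of $\partial^{\widetilde x}_{\widetilde y}$ carries the twist $\detcotgb_{\widetilde y/S}$. I would resolve this by using the distinguished triangle for the cotangent complex of the composite $\widetilde x \to Q \to S$ (and similarly for $\widetilde y$), which gives an isomorphism $\detcotgb_{\widetilde x/S} \simeq \detcotgb_{\widetilde x/Q}\otimes \detcotgb_{Q/S}\!\mid_{\widetilde x}$ of virtual bundles, and checking that this identification is exactly the one induced by the pullback $\overline\phi^!$ used to define $\widehat{\res}$ in data \textsc{D1}. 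Once this bookkeeping is in place, the required equality follows directly from the axiom \textsc{(R3a)}-type computations for $M$ applied on $\widetilde{Q_v}$.
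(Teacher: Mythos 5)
Your proposal is correct and follows essentially the same route as the paper, which is a direct transcription of Rost's Lemma 7.2: reduce to the case $\widetilde{y}\in\overline{\{\widetilde{x}\}}^{(1)}$, identify both sides as sums over valuations computed on the local model $\widetilde{Q_v}=Q\times_S\Spec\mathcal{O}_v$, and match the valuations $w$ on $\kappa(\widetilde{x})$ centered at $\widetilde{y}$ with the valuations $v=w|_{\kappa(x)}$ centered at $y$. The only step you gloss over is that $\widehat{\partial}^x_y=\sum_v\widehat{\cores}_{\kappa(v)/\kappa(y)}\circ\widehat{\partial}_v$, so extracting the $(\widetilde{x},\widetilde{y})$-component forces you through the intermediate centers $\widetilde{w}\in Q_{\kappa(v)}$ (note $\widetilde{y}$ itself does not lie on the special fibre of $\widetilde{Q_v}$) and requires the transitivity $\cores_{\kappa(w)/\kappa(\widetilde{y})}=\cores_{\kappa(\widetilde{w})/\kappa(\widetilde{y})}\circ\cores_{\kappa(w)/\kappa(\widetilde{w})}$; this double sum over $v$ and $w|v$ is precisely the middle of the paper's displayed computation and should be made explicit.
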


\begin{proof}
	We may assume $\widetilde{y} \in 
	\overline{ \{ \widetilde{x}  \}
	}^{(1)}$, since otherwise both sides are trivial. The dimension inequality \cite[p. 85]{Matsumura80} shows then $y \in \overline{
		\{
		x
		\}
	}^{(1)}$.
	Let $v$ run through the valuations of $\kappa(x)$ with center $y$ in $X$. Moreover, let $w$ run through the valuations on $\kappa( \widetilde{x})$ with center $\widetilde{y}$ in $\widetilde{X}$. The restriction of any $w$ to $\kappa(x)$ is one of the valuations $v$. Let $\widetilde{w} \in Q_{\kappa(v)}$ be the center of $w$ in $\widetilde{X} \times_X \Spec \cO_v$. Now the claim follows from
	\[
	\begin{array}{rcl}
		(\widehat{\partial}^x_y)^{\widetilde{x}}_{\widetilde{y}}
		&
		=
		&
		(\sum_v 
		\widehat{\cores}_{\kappa(v)/\kappa(y)}
		\circ
		\widehat{\partial}_v
		)^{\widetilde{x}}_{\widetilde{y}}
		\\
		{}
		&
		=
		&
		\sum_v
		\sum_{w|v}
		(\widehat{\cores}_{\kappa(v)/\kappa(y)})^{\widetilde{w}}_{\widetilde{y}}
		\circ
		(
		\widehat{\partial}_v
		)^{\widetilde{x}}_{\widetilde{w}}
		\\
		{}
		&
		=
		&
		\sum_v
		\sum_{w|v}
		\cores_{\kappa(\widetilde{w}/\kappa(\widetilde{y})}
		\circ
		\cores_{\kappa(w)| \kappa(\widetilde{w})}
		\circ
		\partial_w
		\\
		{}
		&
		=
		&
		\sum_w
		\cores_{\kappa(w)/\kappa(\widetilde{y})}
		\circ
		\partial_w
		\\
		{}
		&
		=
		&
		\partial^{\widetilde{x}}_{\widetilde{y}}.
	\end{array}
	\]
\end{proof}
It follows from \cite[Proposition 1.4.6]{DegliseFeldJin22} that the data of the MW-cycle premodule $\widehat{M}$ commute with the differentials of the complex $C_*(Q_F,M)$. Passing to homology, we obtain data D1-D4 for the object function $A_q[\rho,M]$.

\begin{thm}
	Keeping the previous notations, the object function $A_p[\rho,M]$ together with these data is a Milnor-Witt cycle module over $S$.
\end{thm}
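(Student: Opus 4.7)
The plan is to derive the remaining axioms for $A_p[\rho,M]$ from those of $M$ on $\widetilde{X}=Q\times_S X$, using Proposition~\ref{PropRost7.2} as the crucial bridge. Since the previous theorem already established that $\widehat{M}$ is an MW-cycle premodule, and the text preceding the statement notes that the data D1--D4 commute with the complex differentials (via \cite[Proposition 1.4.6]{DegliseFeldJin22}), the induced data D1--D4 on $A_p[\rho,M]$ are well defined and automatically satisfy the premodule rules R1a--R3e. What remains is to check the two cycle-module axioms that distinguish modules from premodules: (FD) (local finiteness of the boundary) and (C) (the Rost--Schmid complex is indeed a complex, i.e.\ $d\circ d=0$).

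First I would establish axiom (FD) for the premodule $\widehat{M}$. Let $x\in X_{(p)}$ and $\alpha\in \widehat{M}(x)=C_p(Q_{\kappa(x)},M,\detcotgb_{Q_{\kappa(x)}/Q}^{\vee}\otimes l)$. By definition, $\alpha$ is supported on a finite set of points $\widetilde{x}_1,\dots,\widetilde{x}_r$ of $Q_{\kappa(x)}$. By Proposition~\ref{PropRost7.2}, each component $(\widehat{\partial}^x_y)^{\widetilde{x}_i}_{\widetilde{y}}$ coincides with $\partial^{\widetilde{x}_i}_{\widetilde{y}}$ on $\widetilde{X}$. Applying axiom (FD) for $M$ on $\widetilde{X}$ to each $\widetilde{x}_i$ yields finitely many $\widetilde{y}$'s with non-zero contribution, and a fortiori only finitely many $y\in X$ downstairs.

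Next, I would verify axiom (C). The double differential of the Rost--Schmid complex associated to $\widehat{M}$ on a scheme $X$ is given pointwise by $\sum_{y}\widehat{\partial}^y_z\circ\widehat{\partial}^x_y$. Expanding via Proposition~\ref{PropRost7.2}, each component
\[
\bigl(\widehat{\partial}^y_z\circ\widehat{\partial}^x_y\bigr)^{\widetilde{x}}_{\widetilde{z}}
=\sum_{\widetilde{y}}\,\partial^{\widetilde{y}}_{\widetilde{z}}\circ\partial^{\widetilde{x}}_{\widetilde{y}}
\]
is the corresponding component of the double differential for the MW-cycle module $M$ on $\widetilde{X}$, which vanishes by axiom (C) for $M$. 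Hence $\widehat{M}$ satisfies (C) on every scheme, and so the Rost--Schmid complex $C_*(\cdot,\widehat{M})$ genuinely is a complex. In particular, taking homology produces $A_p[\rho,M]$, and since the data D1--D4 descend to homology, the axioms (FD) and (C) for $A_p[\rho,M]$ follow from those of $\widehat{M}$ just established.

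The only subtle point (and the step I would grind through most carefully) is keeping track of the twist $\detcotgb_{Q_F/Q}^{\vee}\otimes l$ when decomposing $\widehat{\partial}^x_y$ as in Proposition~\ref{PropRost7.2}: one must check that under this twisting convention the identification of components with the boundaries $\partial^{\widetilde{x}}_{\widetilde{y}}$ on $\widetilde{X}$ is compatible with the canonical isomorphism $\detcotgb_{\widetilde{x}/S}\simeq \detcotgb_{Q_{\kappa(x)}/Q}^{\vee}\otimes\detcotgb_{\widetilde{x}/Q_{\kappa(x)}}$ (and analogously at $\widetilde{y}$). Once this compatibility is verified, everything falls into place and $A_p[\rho,M]$ acquires the structure of a Milnor--Witt cycle module over $S$.
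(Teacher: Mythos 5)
Your overall strategy is the same as the paper's: everything is reduced to the cycle module $M$ on $\widetilde{X}=Q\times_S X$ via Proposition~\ref{PropRost7.2}, the premodule rules and (FD) come for free, and (C) is obtained by matching the double differential of $\widehat{M}$ against that of $M$. The (FD) argument you give is exactly the paper's.

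For (C), however, the single step you assert without justification is precisely where the content of the proof sits. You write that
\[
\bigl(\widehat{\partial}^y_z\circ\widehat{\partial}^x_y\bigr)^{\widetilde{x}}_{\widetilde{z}}
=\sum_{\widetilde{y}}\,\partial^{\widetilde{y}}_{\widetilde{z}}\circ\partial^{\widetilde{x}}_{\widetilde{y}}
\]
\emph{is} the corresponding component of the double differential of $M$ on $\widetilde{X}$. But on the left the index $\widetilde{y}$ only runs over the points appearing in the graded pieces of the complex for $\widehat{M}$ (points of $Q_{\kappa(\xi)}$ of fiber dimension $q-1$, of $Q_{\kappa(x)}$ of fiber dimension $q$, of $Q_{\kappa(x_0)}$ of fiber dimension $q+1$), whereas axiom (C) for $M$ gives the vanishing of the sum over \emph{all} of $Z^{(1)}$ with $Z=\overline{\{\widetilde{x}\}}_{(\widetilde{z})}$. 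A priori your sum could be a proper subsum of a vanishing sum, which need not vanish. What closes this is the dimension inequality (cited from Matsumura, and already used in the proof of Proposition~\ref{PropRost7.2}): it yields
\[
Z^{(1)}\subset (Q_{\kappa(\xi)})_{(q-1)}\cup\bigcup_x (Q_{\kappa(x)})_{(q)}\cup (Q_{\kappa(x_0)})_{(q+1)},
\]
so no codimension-one point of $Z$ is missed by the grading and the two sums coincide. You should add this containment explicitly. By contrast, the twisting compatibility you flag as the delicate point is already built into the statement of Proposition~\ref{PropRost7.2} and requires no further verification at this stage.
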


\begin{proof}
	The rules for the data of the MW-cycle premodule $A_p[\rho,M]$ are immediate from the rules for $\widehat{M}$. Moreover, axiom (FD) for $M$ and Proposition \ref{PropRost7.2} show that (FD) holds for $\widehat{M}$ and thus for $A_p[\rho,M]$. It remains to verify axiom (C).
	\par Consider the map
	\begin{center}
		$\xymatrix{
			C_p(Q_{\kappa(\xi)})
			\ar[r]^-{\delta}
			&
			C_{p-1}(Q_{\kappa(\xi)})
			\oplus
			\bigoplus_{x\in X^{(1)}}
			C_p(Q_{\kappa(x)})
			\oplus
			C_{p+1}(Q_{\kappa(x_0)})
			\ar[r]^-{\delta}
			&
			C_p(Q_{\kappa(x_0)})	
		}$
	\end{center}
	defined by $\delta^z_y= \partial^z_y$ with $\partial^z_y$ as in \cite[§5.3.13]{DegliseFeldJin22} with respect to the scheme $Q\times_B X$ (we have shortened the notation by omitting $M$).
	\par By Proposition \ref{PropRost7.2}, we are reduced to show $\delta \circ \delta = 0$. It suffices to check that
	$(\delta \circ \delta)^z_y=0$ for $z\in 
	(Q_{\kappa(\xi)})_{(q)}$ and 
	$y\in
	(Q_{\kappa(x_0)})_{(q)}$
	with $y\in \overline{\{ z\} }^{(2)}$ 
	(here $\overline{\{ z\} }$ is the closure of $z$ in $\widetilde{X}$). The dimension inequality \cite[p. 85]{Matsumura80} shows
	\begin{center}
		$Z^{(1)}
		\subset
		(Q_{\kappa(\xi)})_{(q-1)}
		\cup
		\bigcup_x
		(Q_{\kappa(x)})_{(q)}
		\cup
		(Q_{\kappa(x_0)})_{(q+1)}$
	\end{center}
	with $Z= \overline{\{ z\} }_{(y)}$. We are done by axiom (C) for $M$.
\end{proof}

\begin{df}
	Keeping the previous notations, the Milnor-Witt cycle module $A_p[\rho, M]$ is called the \textit{$p$-th relative perverse homology} of $M$ with respect to $\rho$.
\end{df}

\begin{rem}
	One should also obtain the results present in \cite[§8]{Rost96}. In particular, the MW-cycle module $A_q[\rho, M]$ could be used to give another proof of the homotopy invariance of the Rost-Schmid complex.
\end{rem}

\subsection{The cup product}

\begin{paragr}
	We follow ideas of Merkurjev \cite{Merkurjev03}.
	We work over a base field $k$. We fix $M$ a Milnor-Witt cycle module over $k$.
	
\end{paragr}
\begin{paragr}
		Let $M\times N \to P$ be a bilinear pairing of MW-cycle modules over $k$. Let $X,Y$ and $Z$ be smooth schemes over $k$ with $Y$ irreducible smooth and proper. Denote by $\Delta:Y \to Y\times Y$ the diagonal map. Let $q$ be an integer and $l_X$ (resp. $l_Y$, $l'_Y$, and $l'_Z$) a line bundle over $X$ (resp. $Y$, $Y$, and $Z$). Assume that $\detcotgb_{Y/k}\otimes l_Y \otimes l'_Y \simeq \cO_Y$.
	\par We have a $\cup$-product
	\begin{center}
		$\cup: A_r(X \times Y, M_s, l_X \otimes l_Y)
		\otimes
		A_p(Y\times Z,N_q,l'_Y\otimes l'_Z)
		\to
		A_{r+p-d_Y}(X\times Z,P_{s+q+d_Y},l_X \otimes l_Z)
		$
	\end{center}
	defined as the composition
	\begin{center}
		$\xymatrix{
			A_r(X \times Y, M_s,l_X \otimes l_Y)
			\otimes
			A_p(Y\times Z,N_q,l'_Y\otimes l'_Z)
			\ar[d]^-{\times}
			\\
			A_{r+p}(X\times Y \times Y \times Z,P_{s+q}, l_X \otimes l_Y \otimes l'_Y\otimes l'_Z)
			\ar[d]^{(\Id_{X} \otimes \Delta \otimes \Id_{Z})^*}
			\\
			A_{r+p-d_Y}(X\times Y\times Z, P_{s+q+d_Y},l_X \otimes  \detcotgb_{Y/k} \otimes l_Y \otimes l'_Y\otimes l'_Z)
			\ar[d]^-{\simeq}	
			\\
			A_{r+p-d_Y}(X\times Y\times Z, P_{s+q+d_Y},l_X \otimes l'_Z)
			\ar[d]^-{{\pi_{XZ}}_*}	
			\\
			A_{r+p-d_Y}(X\times Z,P_{s+q+d_Y},l_X \otimes l'_Z)
			}$
			
	\end{center}
	where $\times$ is the cross product (see \cite[Section 10]{Fel21}), $\Delta: Y \to Y \times Y$ is the diagonal embedding and $\pi_{XZ}:X\times Y \times Z \to X \times Z$ is the projection. The pushforward ${{p_{XZ}}_*}$ is well-defined because $Y$ is smooth and proper.

\end{paragr}

\begin{paragr}
	In particular, taking $N=M=P=\KMW$, $l_X=\detcotgb_{X/k}^{\vee}$, $l_Y=\cO_Y$, $l'_Y=\detcotgb_{Y/k}^{\vee}$, $l'_Z=\cO_Z$, $r=-s$ and $p=-q$, we have the product

	\begin{center}
		$\cup: \CHW_r(X \times Y,\omega_{X/S}^{\vee})
		\otimes
		\CHW_p(Y\times Z,\omega_{Y/S}^{\vee})
		\to
		\CHW_{r+p-d_Y}(X\times Z,\omega_{X/S}^{\vee})
		$
	\end{center}
	which could be taken as the composition law for the category of Milnor-Witt integral correspondences $\WCor$  with objects the smooth proper schemes over $k$ and morphisms
	\begin{center}
		$\Hom_{\WCor}(X,Y)
		=
		\bigoplus_i \CHW_{d_i}(X_i \times Y, \omega_{X/S}^{\vee})$,	
	\end{center}
	where $X_i$ are irreducible (connected) components of $X$ with $d_i= \dim X_i$.
\end{paragr}

\section{Milnor-Witt rational correspondences}
\label{section_rational_corr}

Let $X$ be a smooth and proper $k$-scheme and $l_X$ (resp. $l_Y$) a line bundle over $X$ (resp. $Y$). There is a canonical map of complexes
\begin{center}
	$\Theta_{M}:
	C_p(X\times Y, M_q,l_X \otimes l_Y)
	\to
	C_p(X, A_0[Y,M_{q},l_Y],l_X)$,
\end{center}
that takes an elements in $M(z,\detcotgb_z \otimes {l_X}_{|z}\otimes  {l_Y}_{|z})$ for $z\in (X \times Y)_{(p)}$ to zero if dimension of the projection $x$ of $z$ in $X$ is strictly less than $p$, and identically to itself otherwise. In the latter case, we consider $z$ as a point of dimension $0$ in $Y_x:=Y_{\kappa(x)}$ under the inclusion $Y_x \subset X \times Y$. Thus, $\Theta_{Y,M}$ ``ignores" points in $X \times Y$ that lose dimension being projected to $X$.
\par We study various compatibility properties of $\Theta_M$.
\subsection{Cross products}
	Let $M\times N \to P$ be a bilinear pairing of MW-cycle modules over $k$. For a smooth scheme $Y$ over $k$ and $l_Y$ a line bundle over $Y$, we can define a pairing
	\begin{center}
		$M
		\times A_0[Y,N, l_Y]
		\to A_0[Y,P,l_Y]$
	\end{center}
in an obvious way.

\begin{lm}
	\label{Lem_theta_and_product_commute}
	For $X,Y,Z$ smooth $k$-schemes, and $l_X$ (resp. $l_Y$, $l_Y$) a line bundle over $X$ (resp. $Y$, $Z$), the following diagram is commutative:
		\begin{center}
			\resizebox{13cm}{!}{
$\xymatrix{
C_p( X,M_q,l_X)
\otimes
C_r(Y\times Z,N_s,l_Y\otimes l_Z)
\ar[d]^{\Id \times \Theta_{N}}
\ar[r]^-{\times}
&
C_{p+r}(X\times Y \times Z, P_{q+s},l_X \otimes l_Y \otimes l_Z)
\ar[d]^{\Theta_{P}}
\\
C_p(X, M_q,l_X)
\otimes
C_r(YA_0[Z,N_s,l_Z],l_Y)
\ar[r]^-{\times}
&
C_{p+r}(X\times Y, A_0[Z,P_{q+s},l_Z],l_X\otimes l_Y).
}$	
}
	\end{center}
	
\end{lm}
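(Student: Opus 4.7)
The plan is to reduce the verification to basic cycles and compare the two paths pointwise. Take $\alpha = m\cdot[x]$ with $x \in X_{(p)}$, $m \in M_q(\kappa(x), \ldots)$, and $\beta = n\cdot[w]$ with $w \in (Y\times Z)_{(r)}$, $n \in N_s(\kappa(w), \ldots)$; it suffices to show that the two images of $\alpha\otimes\beta$ agree as elements of $C_{p+r}(X\times Y, A_0[Z,P_{q+s},l_Z], l_X\otimes l_Y)$. Along the top path, $\alpha\times\beta$ is supported on the $(p+r)$-dimensional generic points $\xi$ of $\Spec(\kappa(x)\otimes_k \kappa(w))$, viewed inside $X\times Y\times Z$, with value obtained from $m\otimes n$ by the scalar-extension rule for the cross product in \cite[Section~10]{Fel19}; then $\Theta_P$ kills those $\xi$ whose projection $\xi':=p_{XY}(\xi)$ has dimension strictly less than $p+r$, and otherwise records the same element, now interpreted as the class of a closed point of $Z_{\kappa(\xi')}$ in $A_0(Z_{\kappa(\xi')}, P_{q+s}, \ldots)$.

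Along the bottom path, $\Theta_N(\beta)$ vanishes unless $w$ projects to some $y\in Y_{(r)}$, in which case its stalk at $y$ is the class of $n\cdot[w]$ in $A_0(Z_{\kappa(y)}, N_s, \ldots)$. Using the pairing $M\times A_0[Z,N,l_Z]\to A_0[Z,P,l_Z]$ introduced just before the lemma, the cross product $\alpha\times\Theta_N(\beta)$ is supported on the $(p+r)$-dimensional generic points $\xi'$ of $\Spec(\kappa(x)\otimes_k\kappa(y))$ inside $X\times Y$, and its value at $\xi'$ is the scalar extension of $m$ multiplied (in the sense of the pairing) by the base-change pullback of $n\cdot[w]$ along $Z_{\kappa(\xi')}\to Z_{\kappa(y)}$. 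Unwinding this, one gets the zero-cycle on $Z_{\kappa(\xi')}$ supported at the generic points of $\Spec(\kappa(\xi')\otimes_{\kappa(y)}\kappa(w))$, with values $\res(m)\cdot\res(n)$.

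The matching of the two answers rests on the fibre-product identification
\[
\Spec\bigl(\kappa(x)\otimes_k\kappa(w)\bigr)\;\simeq\;\Spec\bigl(\kappa(x)\otimes_k\kappa(y)\bigr)\times_{\Spec\kappa(y)}\Spec\kappa(w),
\]
together with the observation that a generic point $\xi$ on the left has dimension $p+r$ in $X\times Y\times Z$ if and only if its image $\xi'\in X\times Y$ has dimension $p+r$ \emph{and} the fibre of $\xi$ over $\xi'$ is zero-dimensional in $Z_{\kappa(\xi')}$. When $w$ does not project to a point of $Y$ of dimension $r$, the dimension inequality of \cite[p.~85]{Matsumura80}, used already in the proof of Proposition~\ref{PropRost7.2}, forces every $p_{XY}(\xi)$ to drop dimension and both sides vanish simultaneously.

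The main obstacle will be the bookkeeping of the line-bundle twists and the scalar-extension factors: one has to trace the transitivity triangle for the cotangent complexes of $\xi\to\xi'\to\Spec k$ through the two cross-product formulas and verify that the resulting identifications in $P_{q+s}\bigl(\kappa(\xi),\detcotgb_{\xi/k}^{\vee}\otimes l_X\otimes l_Y\otimes l_Z\bigr)$ coincide. Once this identification is in place, the equality of the two sums follows from the definition of the pairing $M\times A_0[Z,N,l_Z]\to A_0[Z,P,l_Z]$ and the standard compatibility of pullback along field extensions with the $\KMW$-module structure on $M$.
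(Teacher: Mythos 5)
Your proposal is correct, and it reaches the conclusion by a genuinely different organization of the same underlying computation. The paper fixes a basic cycle $\mu=m\cdot[x]$ with $x\in X_{(p)}$ and factors the cross product $\mu\times(-)$ as the composite of the pullback $\pi_x^{*}$ to the fibre $(Y\times Z)_{\kappa(x)}$, multiplication by the element $m$, and the pushforward $i_{x,*}$ along the inclusion of that fibre into $X\times Y\times Z$; the lemma then reduces to three separate commutations of $\Theta$ with these basic maps, of which only the first carries real content. You instead evaluate both composites directly on a pair of basic cycles $m\cdot[x]$ and $n\cdot[w]$ and match supports and values through the identification $\kappa(x)\otimes_k\kappa(w)\simeq\bigl(\kappa(x)\otimes_k\kappa(y)\bigr)\otimes_{\kappa(y)}\kappa(w)$, together with the transcendence-degree count showing that a generic point $\xi$ survives $\Theta_P$ exactly when it arises from a generic point $\xi'$ of $\Spec\bigl(\kappa(x)\otimes_k\kappa(y)\bigr)$ and a point of the Artinian fibre $\Spec\bigl(\kappa(\xi')\otimes_{\kappa(y)}\kappa(w)\bigr)$; you also correctly dispose of the degenerate case where $w$ drops dimension over $Y$, so that both sides vanish. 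This is precisely the computation that the paper's first square hides inside the commutation of $\Theta$ with $\pi_x^{*}$, so the two arguments have the same mathematical content: yours is more self-contained and makes the dimension bookkeeping explicit, while the paper's is more modular and reuses the standard basic-map formalism (which also makes it clearer that the statement holds at the level of complexes). The one step you explicitly defer, the comparison of the determinant twists through the transitivity triangle for $\xi\to\xi'\to\Spec k$, is routine and is not carried out in the paper's proof either; to call the proof complete you should still record that the canonical isomorphism $\detcotgb_{\kappa(\xi)/k}\simeq\detcotgb_{\kappa(\xi)/\kappa(\xi')}\otimes\detcotgb_{\kappa(\xi')/k}$ identifies the two coefficient groups compatibly with the restriction maps used on both sides.
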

\begin{proof}
	Let $x \in X_{(p)}$ and $\mu \in C_p(X,M_q,l_X)$. Consider the following commutative diagram

		\begin{center}
		$\xymatrix{
			C_r(Y\times Z, N_s, l_Y\otimes l_Z)
			\ar[r]^{\Theta_{N}}
			\ar[d]^{\pi'^*_x}
			&
			C_r(Y, A_0[Z,N_s,l_Z],l_Y)
			\ar[d]^{\pi^*_x}
			\\
			C_r((Y\times Z)_x, N_s, l_Y\otimes l_Z)
			\ar[r]^{\Theta_{N}}
			\ar[d]^-{m'_{\mu}}
			&
			C_{r}(Y_x, A_0[Z,N_s,l_Z],l_Y)
			\ar[d]^-{m_{\mu}}
			\\
			C_{p+r}( (Y\times Z)_x, P_{q+s}, l_X\otimes l_Y\otimes l_Z)
			\ar[r]^{\Theta_{P}}
			\ar[d]^-{i'_{x,*}}
			&
			C_{p+r}(Y_x, A_0[Z,P_{q+s},l_Z], l_X\otimes l_Y)
			\ar[d]^-{i_{x,*}}
			\\
			C_{p+r}(X\times Y \times Z, P_{q+s},l_X\otimes l_Y\otimes l_Z)
			\ar[r]^{\Theta_{P}}
			&
			C_{p+r}(X\times Y, A_0[Z,P_{q+s},l_Z],l_X\otimes l_Y)
			}$
			
		\end{center}
%	\begin{center}
%	\resizebox{6in}{!}{	$\xymatrix{
%	C_p(X\times Y, N_q, l)
%	\ar[r]^{\pi'^*_z}	
%	\ar[d]^{\Theta_{N}}
%	&
%	C_p((X\times Y)_z, N_q, l)
%	\ar[r]^-{m'_{\mu}}
%&	
%C_{p+r}( (X\times Y)_z, P_{q+s}, l\otimes l')
%\ar[r]^-{i'_{z,*}}
%\ar[d]^{\Theta_{P}}
%&
%C_{p+r}(X\times Y \times Z, P_{q+s},l\otimes l')
%\ar[d]^{\Theta_{P}}
%\\
%C_p(X, A_0[Y,N_q],l)
%\ar[r]^{\pi^*_z}
%&
%C_{p}(X_z, A_0[Y,N]_q,*)
%\ar[r]^-{m_{\mu}}
%&
%C_{p+r}(X_z, A_0[Y,P]_{q+s},l\otimes l')
%\ar[r]^-{i_{z,*}}
%&
%C_{p+r}(X\times Z, A_0[Y,P]_{q+s},l\otimes l')
%	}$
%}
%	\end{center}
where $\pi_x:Y_x \to Y$ and $\pi'_x: (X\times Y)_x \to X\times Y$ are the natural projections, $m_{\mu}$ and $m'_{\mu}$ are the multiplications by $\mu$, and $i_x : Y_x \to X\times Y$ and $i'_x:(Y \times Z)_z \to X \times Y \times Z$ are the inclusions. By the definition of the cross product, the compositions in the two rows of the diagram are the multiplications by $\mu$.

\end{proof}

\begin{paragr}
	{\sc Pullback maps}
	Let $f:Z \to X$ be a regular closed embedding of smooth schemes of dimension $s$ and $l$ a line bundle over $X$. We denote by $N_{X/Z}$ the normal bundle over $Z$. For an smooth scheme $Y$, the closed embedding
	\begin{center}
		$f'=f\times \Id_Y : Z\times Y \to X\times Y$
	\end{center}
	is also regular and the normal bundle $N_{X\times Y/Z\times Y}$ is isomorphic to $N_{X/Z} \times Y$.
\end{paragr}

\begin{lm}
	\label{Lem2.2}
	The following diagram is commutative:
	\begin{center}
		$\xymatrix{
			A_p(X\times Y,M_q,l)
			\ar[r]^-{f'^*}
			\ar[d]^{\Theta_{M}}
			&
			A_{p+s}(Z\times Y,M_{q-s},l\otimes \detcotgb_f^{\vee})
			\ar[d]^{\Theta_{M}}
			\\
			A_p(X, A_0[Y,M_q],l)
			\ar[r]^-{f^*}
			&
			A_{p+s}(Z,A_0[Y,M_{q-s}],l\otimes \detcotgb_f^{\vee}).
		}$
		
	\end{center}	
\end{lm}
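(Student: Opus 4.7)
The plan is to check commutativity at the level of Rost-Schmid complexes, in the spirit of Lemma \ref{Lem_theta_and_product_commute}; passing to homology then yields the statement. The structural input is that the square
\[
\xymatrix{
Z \times Y \ar[r]^-{f'} \ar[d]_-{\pi'} & X \times Y \ar[d]^-{\pi} \\
Z \ar[r]^-{f} & X
}
\]
is Cartesian, with $\pi, \pi'$ flat and $f, f'$ regular closed embeddings of codimension $s$ sharing the same normal bundle up to pullback: $N_{X \times Y/Z \times Y} \simeq \pi'^{*} N_{X/Z}$.

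Recall that the pullback $f^*$ along a regular closed embedding, as defined in \cite[§5.4]{DegliseFeldJin22}, factors through deformation to the normal cone: it is the composition of a specialization map associated with the deformation space $D(X,Z)$ and the inverse of the homotopy invariance isomorphism for $N_{X/Z} \to Z$. Since $f' = f \times \Id_Y$, the analogous factorization of $f'^*$ is obtained by base change along $Y \to \spec k$, so the deformation space is $D(X,Z) \times Y$ with normal bundle $N_{X/Z} \times Y$. It therefore suffices to verify, separately for each step, that $\Theta_M$ commutes with it.

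For the homotopy invariance step, the pullback along a flat projection $p : E \to Z$ from a vector bundle is described fiberwise using data D1 of $M$: a component indexed by $z \in Z_{(q)}$ is sent to the corresponding components indexed by the generic points of the fiber $E_z$, which have dimension $q + \rk E$. This operation manifestly sends ``dimension-preserving'' points under the projection to $X$ to ``dimension-preserving'' points under the projection to $Z$, and kills none of them. A direct diagram chase analogous to the one in Lemma \ref{Lem_theta_and_product_commute} then gives the compatibility for both $p$ and $p \times \Id_Y$.

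For the specialization step, the argument runs parallel to Proposition \ref{PropRost7.2}: both specialization maps are built from the Rost-Schmid differential $\widehat{\partial}_t$ for the parameter $t$ of the deformation, and that proposition identifies, component by component, the contributions on each side with the ordinary differentials $\partial_t$ of $M$. The main obstacle will be the bookkeeping of the twists by $\detcotgb_f^\vee$ through the deformation-to-normal-cone factorization, ensuring that the canonical isomorphism $N_{X \times Y/Z \times Y} \simeq N_{X/Z} \times Y$ induces matching identifications on determinants on both sides of the square; once this is in place, commutativity follows from the fact that $\Theta_M$ simply forgets components indexed by points losing dimension under projection, an operation that visibly commutes with any map of complexes respecting the fibration structure over $X$ (resp.\ over $Z$).
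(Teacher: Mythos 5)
Your proposal follows essentially the same route as the paper: factor $f^*$ (and $f'^*=(f\times\Id_Y)^*$) through deformation to the normal cone, using $D(X\times Y,Z\times Y)=D(X,Z)\times Y$ and $N_{X\times Y/Z\times Y}\simeq N_{X/Z}\times Y$, and check that $\Theta_M$ commutes with each constituent (flat pullback, the specialization/boundary step coming from the localization sequence of the deformation space, and the homotopy-invariance isomorphism for the normal bundle). The paper additionally records explicitly that $\Theta_M$ commutes with multiplication by the unit $t$, which is part of the specialization map you treat implicitly; otherwise the arguments coincide.
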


\begin{proof}
	Let ${\pi_X}:\textbf{G}_m \times X \to X$ and ${\pi'_X}:\textbf{G}_m \times X\times Y  \to X\times Y$ be the natural projections. The following diagram
	\begin{center}
		$\xymatrix{
			C_p(X\times Y, M_q,l)
			\ar[r]^-{{(\pi'_X})^{*}}
			\ar[d]^{\Theta_{M}}
			&
			C_{p+1}(\textbf{G}_m \times X\times Y, M_{q-1},l)
			\ar[d]^{\Theta_{M}}	
			\\
			C_p(X, A_0[Y,M_q],l)
			\ar[r]^-{\pi_X^*}
			&
			C_{q+1}(\textbf{G}_m \times X, A_0[Y,M_{q-1}],l)
		}$
		
	\end{center}
	is commutative.
	\par Let $t$ be the coordinate function on $\textbf{G}_m$. The map $\Theta_{M}$ commutes with the multiplication by $t$, i.e. the following diagram
	\begin{center}
		$\xymatrix{
			C_p( \textbf{G}_m \times  X\times Y,M_q,l)
			\ar[r]^{[t]}
			\ar[d]^{\Theta_{M}}
			&
			C_p(\textbf{G}_m \times X\times Y , M_{q+1},l)
			\ar[d]^{\Theta_{M}}
			\\
			C_p(\textbf{G}_m \times X, A_0[Y,M_q],l)
			\ar[r]^{[t]}
			&
			C_p(\textbf{G}_m \times X, A_0[Y,M_{q+1}],l)
		}$
		
	\end{center}
	is commutative.
	\par Let $D=D(X,Z)$ be the deformation space of the embedding $f$ (see e.g. \cite[§10]{Rost96}). There is a closed embedding $i:N_{X/Z}\to D$ with the open complement $j:\textbf{G}_m \times X \to D$. Then $D'=D\times Y$ is the deformation space $D(X\times Y, Z\times Y)$ with the closed embedding
	\begin{center}
		$i'=i\times \Id_Y :
		N_{X\times Y/Z\times Y} \to D'$
	\end{center}
	and the open complement $j'=j\times \Id_Y:\textbf{G}_m \times X\times Y \to D'$. 
	\par 
	The commutative diagram with exact rows
	
	\begin{center}
		$\xymatrix{
			\vdots
			\ar[d]
			&
			\vdots
			\ar[d]
			\\
			C_p(N_{X/Z}\times Y,M_q,l)
			\ar[r]^{\Theta_{M}}
			\ar[d]^-{i'_*}
			&
			C_p(N_{X/Z},A_0[Y,M_q],l)
			\ar[d]^-{i_*}
			\\
			C_p(D',M_q,l)
			\ar[r]^{\Theta_{M}}
			\ar[d]^-{j'^*}
			&
			C_p(D,A_0[Y,M_q],l)
			\ar[d]^-{j^*}
			\\	
			C_p(\textbf{G}_m \times X\times Y, M_q,l)
			\ar[r]^{\Theta_{M}}
			\ar[d]
			&
			C_p(\textbf{G}_m \times X, A_0[Y,M_q],l)
			\ar[d]
			\\
			\vdots
			&
			\vdots
		}$
		
	\end{center}
	%
	%
	%
	%\begin{center}
	%$\xymatrix{
		%\dots 
		%\ar[r]
		%&
		%C_p(N_{X/Z}\times Y,M_q,l)
		%\ar[r]^-{i'_*}
		%\ar[d]^{\Theta_{M}}
		%&
		%C_p(D',M_q,l)
		%\ar[r]^-{j'^*}
		%\ar[d]^{\Theta_{M}}
		%	&
		%	C_p(X\times Y \times \textbf{G}_m, M_q,l)
		%	\ar[r]
		%	\ar[d]^{\Theta_{M}}
		%	&
		%	\dots 
		%	\\
		%	\dots 
		%	\ar[r]
		%&
		%C_p(N_{X/Z},A_0[Y,M_q],l)
		%\ar[r]^-{i_*}
		%&
		%C_p(D,A_0[Y,M_q],l)
		%\ar[r]^-{j^*}
		%&
		%C_p(X \times \textbf{G}_m, A_0[Y,M_q],l)
		%\ar[r]
		%&
		%\dots 
		%}$
	%\end{center}
	induces the commutative diagram
	
	\begin{center}
		$\xymatrix{
			C_p(\textbf{G}_m \times X\times Y ,M_q,l)
			\ar[r]^{\partial}
			\ar[d]^{\Theta_{M}}
			&
			C_{p-1}(N_{X/Z}\times Y,M_q,l)
			\ar[d]^{\Theta_{M}}
			\\
			C_p(\textbf{G}_m \times X, A_0[Y,M_q],l)
			\ar[r]^{\partial}
			&
			C_{p-1}(N_{X/Z}, A_0[Y,M_q],l).	
		}$
	\end{center}
	
	Finally, we also have the commutative diagram
	\begin{center}
		$\xymatrix{
			C_p(Z\times Y,M_q,l\otimes \detcotgb_f!!^{\vee})
			\ar[r]^-{\pi^*}
			\ar[d]^{\Theta_{M}}
			&
			C_{p+s}(N_{X/Z}\times Y,M_{q-s},l)
			\ar[d]^{\Theta_{M}}
			\\	
			C_{p}(Z, A_0[Y,M_q],l\otimes \detcotgb_f^{\vee})
			\ar[r]^-{\pi'^*}
			&
			C_{p+s}(N_{X/Z}, A_0[Y,M_{q-s}],l)
		}$
		
	\end{center}
	where $\pi : N_{X/Z} \to Z$ is the canonical projection and $s$ its relative dimension ($\pi$ is a quasi-isomorphism by homotopy invariance).
	By the definition of the pullback map (see \cite[Section 7]{Feld1}), the result follows from the composition of the previous commutative square.
\end{proof}

\begin{rem}
	The previous lemma could be stated at the level of complexes with the use of Rost's coordinations or by using the homotopy complex defined in \cite[§2.2]{DegliseFeldJin22}, but we do not need this generality.
\end{rem}

\begin{paragr}
	{\sc Pushforward maps}
	Let $f:X \to Z$ be a map of smooth schemes (over $k$). and $l$ a line bundle over $Z$. For an oriented smooth scheme $Y$, set
	\begin{center}
		$f'=f\times \Id_Y:
		X\times Y
		\to Z\times Y$.
	\end{center}
\end{paragr}

\begin{lm}
	\label{Lem_theta_and_pushforward_commute}
	The following diagram
	\begin{center}
		$\xymatrix{
			C_p(X\times Y,M_q,l)
			\ar[r]^{f'_*}
			\ar[d]^{\Theta_{M}}
			&
			C_p(Z\times Y,M_q,l)
			\ar[d]^{\Theta_{M}}
			\\
			C_p(X,A_0[Y,M_q],l)
			\ar[r]^{f_*}
			&
			C_p(Z, A_0[Y,M_q],l).
		}$		
		
	\end{center}
	
	is commutative.
\end{lm}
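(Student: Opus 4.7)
The plan is to verify the square commutes componentwise on the Rost--Schmid graded pieces. Both $\Theta_{M}$ and the pushforwards $f_{*}$, $f'_{*}$ are defined as direct sums of component maps indexed by points, so it suffices to fix a point $z \in (X\times Y)_{(p)}$ with projection $x := \pi_{X}(z) \in X$, trace the $z$-component along both compositions, and check that the two outputs agree in $C_{p}(Z, A_{0}[Y, M_{q}], l)$.

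Next I would run a case analysis on $\dim x$ and $\dim f(x)$. If $\dim x < p$, then $\Theta_{M}$ kills the $z$-component, so $f_{*}\circ \Theta_{M}$ vanishes on it; since the projection of $f'(z)\in Z\times Y$ to $Z$ is $f(x)$, with $\dim f(x) \leq \dim x < p$, the vertical $\Theta_{M}$ also kills $f'_{*}(z)$, so $\Theta_{M}\circ f'_{*}$ vanishes as well. If $\dim x = p$ but $\dim f(x) < p$, then $f_{*}$ kills the $x$-component of $\Theta_{M}(z)$ by the very definition of pushforward on Rost--Schmid cycles, while on the other route $\Theta_{M}$ again kills $f'_{*}(z)$ for the same dimension reason.

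The only substantial case is $\dim x = \dim f(x) = p$: here $z$ is zero-dimensional in $Y_{x} = Y\times_{k}\kappa(x)$ and $f'(z)$ is zero-dimensional in $Y_{f(x)}$, and going either way around the square the $z$-contribution passes through a corestriction along the finite extension $\kappa(z)/\kappa(f'(z))$. On the $\Theta_{M}\circ f'_{*}$ route this is the definition of $f'_{*}$ at the component $z$. On the $f_{*}\circ \Theta_{M}$ route, the datum D2 of the MW-cycle module $A_{0}[Y,M_{q}]$ was constructed (via $\widehat{\cores}=\phi_{*}$ in the previous subsection) precisely so that the corestriction from $\kappa(x)$ to $\kappa(f(x))$ inside $A_{0}[Y,M_{q}]$ decomposes, component by component on $Y$, as a sum of corestrictions for $M$ along the induced finite extensions after base change. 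The two expressions then coincide by the transitivity of corestriction, i.e.\ rule R1a for $M$. The main obstacle I expect is the bookkeeping of orientations and line-bundle twists, but since $f'=f\times\Id_{Y}$ has cotangent complex pulled back from that of $f$, the orientation of $f$ canonically induces one on $f'$, so that all twist identifications match on the nose.
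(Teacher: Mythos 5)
Your proof is correct and follows essentially the same route as the paper's: a componentwise case analysis on the dimensions of the projections to $X$ and $Z$, with the only nontrivial case being when no dimension is lost, where both compositions reduce to the corestriction along $\kappa(z)/\kappa(f'(z))$ because the datum D2 of $A_0[Y,M_q]$ is by construction the pushforward $\phi_*$ of Rost--Schmid complexes along $Y_x\to Y_{f(x)}$. The only quibble is that in this last case no appeal to transitivity (R1a) is actually needed --- the two component maps are literally the same corestriction by unwinding the definition of $\widehat{\cores}=\phi_*$.
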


\begin{proof}
	Let $u\in (X\times Y)_{(p)}$, $a\in M(\kappa(u),\detcotgb_u \otimes l)$. Set $v=f'(u) \in Z\times Y$. If $\dim(v) < p$ then $(f'_*)_u(a)=0$. In this case, the dimension of the projection $y$ of $u$ in $Y$ is less than $p$ and hence $(\Theta_M)_u(a)=0$.
	\par Assume that $\dim(v)=p$. Then $\kappa(u)/\kappa(v)$ is a finite field extension and
	\begin{center}
		$b=(f'_*)_u(a)
		=
		\cores_{\kappa(u)/\kappa(v)}(a)
		\in M(\kappa(v), \detcotgb_v \otimes l)$.
	\end{center}
	If $\dim(y) < p$, then $(\Theta_M)_u(a) = 0$, and $\Theta_v(b)=0$. 
	\par 
	Assume that $\dim(y)=p$, then
	\begin{center}
		$(\Theta_M\circ
		f'_*)_u(a)
		=
		\cores_{\kappa(u)/\kappa(v)}(a)
		=
		b$
	\end{center}
	considered as an element of $A_0[Y,M_q](\kappa(z),\detcotgb_z \otimes l)=A_0(Y_z,M_q,l)$, where $z$ is the image of $v$ in $Z$. On the other hand, 
	\begin{center}
		$(f_* 
		\circ 
		\Theta_M)_u
		(a)
		=
		\phi_*(a)$,
	\end{center}
	where $\phi: Y_x \to Y_z$ is the natural map (where $x$ is the image of $u$ in $X$) and is considered as an element of $A_0[Y,M_q](\kappa(z), \detcotgb_z \otimes l)$. It remains to notice that
	\begin{center}
		$\phi_*(a)
		=
		\cores_{\kappa(u)/\kappa(v)}(a)=b$.
	\end{center}

\end{proof}

\subsection{Rational correspondences}
Let $Y$ and $Z$ be smooth schemes over $k$. Assume $Y$ irreducible and denote by $d_Y$ the dimension of $Y$.By Lemma \ref{Lem_theta_and_product_commute}, for the pairing $ M \times \KMW \to M$ and $``X=Y"$ we have the commutative diagram

\begin{center}
	$\xymatrix{
		A_{0}(Y, M_q)
		\otimes
		\CHW_{d_Y}(Y\times Z, \detcotgb_{Y/k}^{\vee})
		\ar[d]^{\Id\otimes \Theta_{\KMW}}
		\ar[r]^-{\times}
		&
		A_{d_Y}(Y \times Y \times Z, M_{-d_Y + q}, \detcotgb_{Y/k}^{\vee})
		\ar[d]^{\Theta_{M}}
		\\
		A_{0}(Y, M_q)
		\otimes
		A_{d_Y}(Y, A_0[Z, \KMW_{-d_Y}],\detcotgb_{Y/k}^{\vee})
		\ar[r]^-{\times}
		&
		A_{d_Y}(Y\times Y, A_0[Z,M_{-d_Y+q}],\detcotgb_{Y/k}^{\vee}).
	}$	
	
\end{center}
%Assume that $X$ is smooth. 
Let $\Delta:Y\to Y\times Y$ be the diagonal embedding and $\Delta'= \Delta \otimes \Id_Z$. By Lemma \ref{Lem2.2}, the following diagram
\begin{center}
	$\xymatrix{
		A_{d_Y}( Y\times Y \times Z,M_{-d_Y+q},\detcotgb_{Y/k}^{\vee})
		\ar[r]^-{\Delta'^*}
		\ar[d]^{\Theta_{M}}
		&
		A_0(Y\times Z,M_{q})
		\ar[d]^{\Theta_{M}}
		\\
		A_{d_Y}(Y\times Y, A_0[Z,M_{-d_Y+q}],\detcotgb_{Y/k}^{\vee})
		\ar[r]^-{\Delta^*}
		&
		A_0(Y,A_0[Z,M_{q}])	
	}$.
	
\end{center}

is commutative.

\par Finally, assume that the structure map $f:Y\to \Spec k$ is proper and denote by $f'=\Id_X\times f$. Lemma \ref{Lem_theta_and_pushforward_commute} implies that the following diagram

	\begin{center}
	$\xymatrix{
		A_0(Y\times Z,M_q)
		\ar[r]^-{f'_*}
		\ar[d]^{\Theta_{M}}
		&
		A_0(Z,M_q)
		\ar@{=}[d]
		\\
		A_0(Y,A_0[Z,M_q])
		\ar[r]^-{f_*}
		&
		A_0(\Spec k, A_0[Z,M_q]).
	}$	
		
\end{center}

is commutative.

\begin{prop}
	\label{Prop3.1}
	
	Let $Y$ and $Z$ be smooth schemes over $k$, $Y$  an irreducible smooth and proper, and $M$ an MW-cycle module over $k$. Then the pairing
	\begin{center}
		$\cup: A_{0}(Y, M_q)
		\otimes
		\CHW_{d_Y}(Y\times Z,\detcotgb_{Y/k}^{\vee})
		\to
		A_0(Z,M_q)	$
	\end{center}
 is trivial on all cycles in $\CHW_{d_Y}(Y\times Z,\detcotgb_{Y/k}^{\vee})$ that are not dominant over $Y$. In other words, the $\cup$-product factors through a natural pairing
	\begin{center}
		$\cup: A_{0}(Y, M_q)
		\otimes
		\CHW_{0}(Z_{\kappa(Y)},\detcotgb_{Y/k}^{\vee})
		\to
		A_0(Z,M_q)	$
	\end{center}
\end{prop}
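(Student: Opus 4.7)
The plan is to chain the three commutative squares displayed immediately before the statement and then exploit the very definition of $\Theta_{\KMW}$ to obtain the vanishing on non-dominant cycles. Concretely, I would first concatenate those squares vertically. Each square says $\Theta_{(-)}$ intertwines with one of the three constituent operations of the $\cup$-product (the cross product, pullback along $\Delta' = \Delta \otimes \Id_Z$, and proper pushforward along $f' = \Id_Z \times f$), so the outer rectangle of the combined diagram identifies the $\cup$-product on $A_0(Y,M_q) \otimes \CHW_{d_Y}(Y \times Z, \detcotgb_{Y/k}^{\vee})$ with the composition that first applies $\Id \otimes \Theta_{\KMW}$ to land in $A_0(Y,M_q) \otimes A_{d_Y}(Y, A_0[Z, \KMW_{-d_Y}], \detcotgb_{Y/k}^{\vee})$ and then runs the analogous cross/diagonal/proper-pushforward sequence, reaching $A_0(\Spec k, A_0[Z, M_q]) = A_0(Z, M_q)$. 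In particular, any class annihilated by $\Theta_{\KMW}$ contributes zero to the pairing.

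Next I would invoke the defining property of $\Theta$ recalled at the opening of Section \ref{section_rational_corr}: a point $z \in (Y \times Z)_{(d_Y)}$ is sent to zero by $\Theta_{\KMW}$ precisely when the projection of $z$ to $Y$ has dimension strictly less than $d_Y$. Since $Y$ is irreducible of dimension $d_Y$, its unique $d_Y$-dimensional point is the generic point $\eta_Y$; hence a $d_Y$-dimensional cycle on $Y \times Z$ is killed by $\Theta_{\KMW}$ if and only if none of its components is dominant over $Y$. Combined with the previous paragraph, this establishes the first assertion.

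For the promised factorization, I would use the fact that the image of $\Theta_{\KMW}$ lies in the group $A_{d_Y}(Y, A_0[Z, \KMW_{-d_Y}], \detcotgb_{Y/k}^{\vee})$. By the very same support reasoning, this group is concentrated at $\eta_Y$ and, via the definition of the object function $A_0[Z, \KMW_{-d_Y}, \cO_Z]$ given in Section \ref{section_constructions}, is canonically identified with $\CHW_0(Z_{\kappa(Y)}, \detcotgb_{Y/k}^{\vee})$ after matching twists at the generic point (using $\detcotgb_{Z_{\kappa(Y)}/Z} \simeq \detcotgb_{\kappa(Y)/k}|_{Z_{\kappa(Y)}}$, which pulls back from the fibre of $\detcotgb_{Y/k}$ at $\eta_Y$). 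Post-composing with the remainder of the factorization then yields the desired induced pairing $A_0(Y,M_q) \otimes \CHW_0(Z_{\kappa(Y)}, \detcotgb_{Y/k}^{\vee}) \to A_0(Z, M_q)$.

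The hardest part, I expect, will not be the conceptual argument (which is essentially forced by combining the three preceding lemmas and the definition of $\Theta$) but the bookkeeping of line-bundle twists through three successive commutative squares and the final generic-point identification, since the determinants of the cotangent complexes shift under cross product, diagonal pullback, and base change along $\Spec\kappa(Y) \to \Spec k$; I would treat this as routine but requiring care.
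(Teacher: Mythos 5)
Your proposal is correct and follows essentially the same route as the paper: compose the three commutative squares preceding the statement (for the cross product, the diagonal pullback, and the proper pushforward) and then use the identification $A_{d_Y}(Y, A_0[Z,\KMW_{-d_Y}],\detcotgb_{Y/k}^{\vee}) = \CHW_{0}(Z_{\kappa(Y)},\detcotgb_{Y/k}^{\vee})$, which holds because the only $d_Y$-dimensional point of the irreducible $Y$ is its generic point. Your added explanation of why $\Theta_{\KMW}$ kills exactly the non-dominant cycles is a useful elaboration that the paper leaves implicit (and you correctly write the target of the factored pairing as $A_0(Z,M_q)$, fixing an apparent typo in the statement).
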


\begin{proof}
	This follows from composing all three diagrams and taking into account that
	\begin{center}
		$A_{d_Y}(Y, A_0[Z,\KMW_{-d_Y}],\detcotgb_{Y/k}^{\vee})
		=
	\CHW_{0}(Z_{\kappa(Y)},\detcotgb_{Y/k}^{\vee}).$
	\end{center}
\end{proof}

\begin{paragr}
	Keeping the previous notations, for $Z$ irreducible smooth scheme over $k$, the diagram
	
	\begin{center}
		$\xymatrix{
	A_{d_X}(X\times Y, M_q,\detcotgb_{X/k}^{\vee})
	\otimes 
	\CHW_{d_Y}(Y\times Z,\detcotgb_{Y/k}^{\vee})
	\ar[r]^-{\cup}
	\ar[d]
	&
	A_{d_X}(X\times Z,M_q,\detcotgb_{X/k}^{\vee})
	\ar[d]
	\\
	A_0(Y_{\kappa(X)},M_q,\detcotgb_{X/k}^{\vee})
	\otimes
	\CHW_{d_Y}(Y\times Z,\detcotgb_{Y/k}^{\vee})
	\ar[r]^-{\cup}
	\ar[d]
	&
	A_0(Z_{\kappa(X)},M_q,\detcotgb_{X/k}^{\vee})
	\ar[d]
	\\
	A_0(Y_{\kappa(X)},M_q,\detcotgb_{X/k}^{\vee})
	\otimes
	\CHW_0(Z_{\kappa(Y)},\detcotgb_{Y/k}^{\vee})
	\ar[r]^-{\cup}
	&
	A_0(Z_{\kappa(X)},M_q,\detcotgb_{X/k}^{\vee})	
	}$
	
	\end{center}
	is commutative.
\end{paragr}

\begin{paragr}
	In particular, we have a well defined pairing
	\begin{center}
		$\cup:\CHW_0(Y_{\kappa(X)},\detcotgb_{X/k}^{\vee})
		\otimes
		\CHW_0(Z_{\kappa(Y)},\detcotgb_{Y/k}^{\vee})
		\to
		\CHW_0(Z_{\kappa(X)},\detcotgb_{X/k}^{\vee})$
	\end{center}
that can be taken for the composition law in the category of Milnor-Witt rational correspondences $\WRatCor(k)$ whose objects are the smooth proper schemes over $k$ and morphisms are given by
\begin{center}
	$\Hom_{\WRatCor(k)}(X,Y)
	=
	\bigoplus_i \CHW_0(Y_{\kappa(X_i)},\detcotgb_{X/k}^{\vee})$,
\end{center}
where $X_i$ are all irreducible (connected) components of $X$.
\par There is an obvious functor
\begin{center}
	$\Xi: \WCor(k) \to \WRatCor(k)$.
\end{center}
\end{paragr}

\begin{thm}
	For an MW-cycle module $M$, there exists a well-defined covariant functor
	\begin{center}
		$\WRatCor(k) \to \ab, X\mapsto A_0(X,M), a\mapsto -\cup a$.
	\end{center}
More precisely, the functor $\WCor(k) \to \WRatCor(k) $ factors through $\Xi$.
	
\end{thm}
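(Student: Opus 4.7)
The plan is to verify the three standard functoriality requirements and then the factorization. First, the action: for $\alpha\in\Hom_{\WRatCor}(X,Y)=\CHW_0(Y_{\kappa(X)},\detcotgb_{X/k}^{\vee})$, the map $-\cup\alpha:A_0(X,M)\to A_0(Y,M)$ is precisely the pairing provided by Proposition~\ref{Prop3.1} with the roles of $(Y,Z)$ played by $(X,Y)$: the $\WCor$-cup product on $A_0(X,M_q)\otimes\CHW_{d_X}(X\times Y,\detcotgb_{X/k}^{\vee})$ factors through the natural map $\CHW_{d_X}(X\times Y,\ldots)\to\CHW_0(Y_{\kappa(X)},\ldots)$. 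The identity of $\Hom_{\WRatCor}(X,X)$ is the class of the diagonal at the generic point of $X$, and a direct unwinding of cross product, diagonal pullback and projection pushforward shows that $-\cup\Id_X$ is the identity on $A_0(X,M)$.

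The factorization statement is essentially a restatement of the large $3\times 2$ commutative diagram given just before the theorem. Its left column computes the $\WCor(k)$-action on $A_0(X,M)$ (starting from $\CHW_{d_X}(X\times Y,\ldots)$), its right column computes the candidate $\WRatCor(k)$-action (starting from $\CHW_0(Y_{\kappa(X)},\ldots)$), and the vertical arrows between the rows are precisely the maps realizing $\Xi$ on morphism sets. Commutativity of that diagram therefore yields the required factorization through $\Xi$.

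The main obstacle is compatibility with composition: given $\alpha\in\Hom_{\WRatCor}(X,Y)$, $\beta\in\Hom_{\WRatCor}(Y,Z)$ and $a\in A_0(X,M)$, one must show $(a\cup\alpha)\cup\beta = a\cup(\alpha\cup\beta)$. Both sides originate from the same triple cup product on $A_*(X\times Y\times Z,M,\ldots)$ --- built from the cross product, the pullback along $\Id_X\times\Delta_Y\times\Id_Z$, and the pushforward along the projection to $X\times Z$ --- whose associativity is standard on the cycle level. To push this associativity through the $\Theta_M$ maps appearing in the definition of the $\WRatCor$-composition, I would apply in succession Lemma~\ref{Lem_theta_and_product_commute} (for cross products), Lemma~\ref{Lem2.2} (for closed-embedding pullbacks, applied to the diagonals) and Lemma~\ref{Lem_theta_and_pushforward_commute} (for proper pushforwards), chained into a single diagram chase analogous to the $3\times 2$ diagram preceding the theorem but with three tensor slots instead of two. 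The genuinely delicate point is the bookkeeping of line bundle twists --- in particular the cancellation $\detcotgb_{Y/k}\otimes l_Y\otimes l'_Y\simeq\cO_Y$ used to define the cup product --- which must be tracked carefully so that all intermediate groups align after threading through the three compatibilities.
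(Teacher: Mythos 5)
Your proposal is correct and follows essentially the same route as the paper: the paper's own proof is the one-line citation of Proposition~\ref{Prop3.1}, with the real content residing in Lemmas~\ref{Lem_theta_and_product_commute}, \ref{Lem2.2}, \ref{Lem_theta_and_pushforward_commute} and the commutative diagrams preceding the theorem, which is exactly the chain you assemble. Your elaboration of the composition-compatibility via a three-slot version of the $\Theta_M$-diagrams is just an explicit spelling-out of what the paper leaves implicit.
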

\begin{proof}
	This follows from Proposition \ref{Prop3.1}.
\end{proof}

\begin{rem}
	Assuming one works with \textit{oriented} (see \ref{def_oriented_scheme}) smooth proper $k$-schemes, then there is also a contravariant functor given by $a \mapsto {}^ta \cup -$.
	We won't need this result.
\end{rem}

\begin{paragr}
	If $(\alpha : X \rightsquigarrow Y) \in \Hom_{\WRatCor(k)}(X,Y)$ is a MW-rational correspondence between two smooth proper $k$-schemes, we have a natural pushforward morphism
	\begin{center}
		$\alpha_*: A_0(X,M) \to A_0(Y,M)$.
	\end{center}
\end{paragr}

\begin{rem}
	If $\alpha$ et $\beta$ are two composable Milnor-Witt rational correspondences, then
	$$(\alpha \circ \beta)_*
	=
	\alpha_* \circ \beta_*.
	$$
\end{rem}

\begin{paragr}
	Let $f:X \dashrightarrow Y$ be a rational morphism of irreducible smooth $k$-schemes. It defines a rational point of $Y_{\kappa(X)}$ over $\kappa(X)$ and hence a morphism in $\Hom_{\WRatCor(k)}(X,Y)$ 
		that we denote by $[f]: X \rightsquigarrow Y$. In fact, the rational correspondence $[f]$ is the image of the class of the (transposed of the) graph of $f$ (as in \cite[Chapter 2, §4.3]{BCDFO}) under the natural map
		\begin{center}
			$\CHW_{d_X}(X\times Y, \detcotgb_{X/k})
			\to
			\CHW_0(Y_{\kappa(X)}, \detcotgb_{X/k})$.
		\end{center}
\end{paragr}

\begin{lm}
	\label{Lem_pushforward_rational_map}
	Let $\kappa/k$ be a finite type extension of fields. Let $f:X \dashrightarrow Y$ be a rational morphism of smooth proper $\kappa$-schemes and let $x\in X$ be a rational point such that $f(x)$ is defined. Denote by $[x] \in \CHW_0(X,\detcotgb_{\kappa/k})$ the $0$-cycle associated to $x$. Then 
	\begin{center}
		$[f]_*([x])= [ f(x)]$
	\end{center}
	in $\CHW_0(Y,\detcotgb_{\kappa/k})$.
\end{lm}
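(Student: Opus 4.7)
The plan is to reduce to a computation in $\WCor(k)$ via the functor $\Xi$ and then unroll the definition of the cup product. Let $U \subseteq X$ be the open subset on which $f$ is defined, so that $x \in U$ and $f|_U: U \to Y$ is a genuine morphism with graph $\Gamma_f \subset U \times Y$. Closing up gives a cycle $[\Gamma_f] \in \CHW_{d_X}(X \times Y, \detcotgb_{X/k}^{\vee})$, and by the very definition of the rational correspondence attached to $f$ we have $[f] = \Xi([\Gamma_f])$. The factorization statement in the theorem just above then yields $[f]_*([x]) = [\Gamma_f]_*([x])$, so it suffices to compute the pushforward by the integral correspondence $[\Gamma_f]$.

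Unrolling the cup product defined in Section 1.2, one has
$$[x] \cup [\Gamma_f] \;=\; (\pi_Y)_* \Bigl( (\Delta_X \times \Id_Y)^*\bigl([x] \times [\Gamma_f]\bigr) \Bigr) \;\in\; \CHW_0(Y, \detcotgb_{\kappa/k}),$$
where $\Delta_X: X \to X \times X$ is the diagonal and $\pi_Y: X \times Y \to Y$ is the second projection. The cross product $[x] \times [\Gamma_f]$ is supported on $\{x\} \times \Gamma_f \subset X \times X \times Y$, and set-theoretically the intersection $(\Delta(X) \times Y) \cap (\{x\} \times \Gamma_f) = \{(x, x, f(x))\}$ consists of the single $\kappa$-rational point $(x,x,f(x))$.

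The geometric input I will exploit is transversality of this intersection. Writing $T_{(x,x,f(x))}(X \times X \times Y) \simeq T_x X \oplus T_x X \oplus T_{f(x)} Y$, the tangent subspace of $\Delta(X) \times Y$ is $\{(v, v, w)\}$ while that of $\{x\} \times \Gamma_f$ is $\{(0, u, df_x(u))\}$, and these together span the ambient tangent space; hence the intersection is transverse at $(x,x,f(x))$. Under transversality, the Chow-Witt pullback $(\Delta_X \times \Id_Y)^*([x] \times [\Gamma_f])$ equals the class of the rational point $(x, f(x)) \in X \times Y$ equipped with the trivial form $\langle 1 \rangle$; pushing forward by $\pi_Y$ then yields $[f(x)]$.

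The hard part will be the careful bookkeeping of the line bundle twists ($\detcotgb_{X/k}^{\vee}$ on the correspondence side, the self-duality $\detcotgb_{Y/k} \otimes \detcotgb_{Y/k}^{\vee} \simeq \cO_Y$ built into the definition of the $\cup$-product, and the ambient twist $\detcotgb_{\kappa/k}$) so that all identifications are canonical, together with the verification that a transverse intersection of smooth cycles at a $\kappa$-rational point contributes precisely the class $\langle 1 \rangle \in \GW(\kappa)$ with no sign or discriminant correction.
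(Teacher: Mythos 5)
Your proposal is correct and follows essentially the same route as the paper: both unroll $[f]_*([x])=[x]\cup[\Gamma]=\pi_*(\Delta_X\times\Id_Y)^*([x]\times[\Gamma])$ and use that the intersection of $\{x\}\times\Gamma$ with the diagonal is the single reduced $\kappa$-point $(x,x,f(x))$, whence the pullback is $[x]\times[f(x)]$ and the projection gives $[f(x)]$. Your explicit tangent-space transversality check is just the justification the paper compresses into the assertion that the preimage is the \emph{reduced} scheme $\{x\}\times\{f(x)\}$, and the twist bookkeeping you flag is likewise left implicit in the paper.
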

\begin{proof}
	Let $ \Gamma \subset X \times Y$ be the graph of $f$. 
	The preimage of $ \{x \} \times \Gamma $ under the morphism $  \Delta_X \otimes \Id_Y :X \times Y \to X \times X \times Y$ is the reduced scheme $\{x\} \times \{ f(x) \} $. 
	%	It follows from \cite[Cor. 57.20]{EKM08} that $( \Id_Y \times \Delta_X)^*( [\Gamma] \times [x]) = [f(x)] \times [x]$, and 
	%	
	Hence 
	$$
	[f]_*([x]) =
	[x] \cup [f]
	= \pi_* ( \Delta_X \otimes \Id_Y)^* ( [x] \times [\Gamma ]  )
	=
	\pi_*( [x]  \times  [f(x)]  )
	=
	[f(x)]
	$$ 
	where $\pi:X\times Y \to Y
	$
	is the projection.
\end{proof}

\begin{cor} \label{Cor_composition_corr_rat_maps}
	Let $f:X \dashrightarrow Y$ and $g:Y \dashrightarrow Z$ be composable rational morphisms of smooth proper schemes and let $h:X \dashrightarrow Z$ be the composition of $f$ and $g$. Then $[g]\circ [f] = [h]$ in $\Hom_{\WRatCor(k)} (X, Z)$.
\end{cor}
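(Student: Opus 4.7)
The plan is to reduce to Lemma~\ref{Lem_pushforward_rational_map} by base-changing all data to the generic point $\eta_X$ of $X$. Set $\kappa := \kappa(X)$. Restriction of $f$ to $\eta_X$ defines a $\kappa$-rational point $y_f \in Y_\kappa(\kappa)$; by the construction of $[f]$ recalled just before Lemma~\ref{Lem_pushforward_rational_map} one has $[f] = [y_f]$ in $\CHW_0(Y_\kappa, \detcotgb_{X/k}^{\vee})$, and similarly $[h] = [z_h]$ for $z_h := h(\eta_X)$. Let $g_\kappa : Y_\kappa \dashrightarrow Z_\kappa$ denote the base change of $g$ along $\Spec \kappa \to \Spec k$; since $h = g \circ f$ is defined at $\eta_X$, the morphism $g_\kappa$ is defined at the point $y_f$ and $g_\kappa(y_f) = z_h$.

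The key step will be the base-change identity
\[
[g] \circ [f] \,=\, [g_\kappa]_{*}([y_f]) \qquad \text{in } \CHW_0(Z_\kappa, \detcotgb_{X/k}^{\vee}),
\]
where the right-hand side is the pushforward of Lemma~\ref{Lem_pushforward_rational_map} applied over $\kappa/k$. I would verify it by unfolding the definition of $\cup$ from Proposition~\ref{Prop3.1}, which consists of a cross product with $[\Gamma_g]$, a Gysin pullback along the partial diagonal $\Id_X \otimes \Delta_Y \otimes \Id_Z$, a proper pushforward along $\pi_{XZ}$, and passage to the generic fiber over $X$. Each of these Rost--Schmid operations is compatible with flat base change along $\Spec \kappa \to \Spec k$; after base change the cross product with $[\Gamma_g]$ becomes the cross product with $[\Gamma_{g_\kappa}]$, and the resulting expression is precisely $[y_f] \cup [\Gamma_{g_\kappa}] = [g_\kappa]_{*}([y_f])$ in the sense of the lemma.

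Once this identity is in hand, Lemma~\ref{Lem_pushforward_rational_map} applied over $\kappa$ to the rational morphism $g_\kappa : Y_\kappa \dashrightarrow Z_\kappa$ between smooth proper $\kappa$-schemes and the rational point $y_f$ immediately gives
\[
[g_\kappa]_{*}([y_f]) \,=\, [g_\kappa(y_f)] \,=\, [z_h] \,=\, [h],
\]
and combined with the previous identification this yields $[g] \circ [f] = [h]$, as required.

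The hard part will be the base-change compatibility in the key step; it involves no genuinely new ideas but does require careful tracking of the twists $\detcotgb^{\vee}$ through the cross products, Gysin maps, and proper pushforwards that define $\cup$, whose naturality under flat base change is standard for the Rost--Schmid complex.
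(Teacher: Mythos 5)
Your proposal is correct and takes essentially the same route as the paper: both identify $[f]$ with the rational point $y$ of $Y_{\kappa(X)}$ and conclude by applying Lemma~\ref{Lem_pushforward_rational_map} over $\kappa(X)$ to $g_{\kappa(X)}$ at that point. The base-change identity you isolate as the ``key step'' is precisely what the paper treats as immediate from the definition of the composition pairing in $\WRatCor(k)$, so you are only making explicit a verification the paper leaves implicit.
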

\begin{proof}
	Let $y$ be the rational point of $Y_{\kappa(X)}$ corresponding to $f$. By assumption, the rational morphism $g_{\kappa(X)} : Y_{\kappa(X)} \dashrightarrow Z_{\kappa(X)}$ is defined at $y$. By Lemma \ref{Lem_pushforward_rational_map} (with $``\kappa=\kappa(X)"$,$``X=Y_{\kappa(X)}"$, $``Y=Z_{\kappa(X)}$ and $``f=g_{\kappa(X)}"$) we see that the composition of correspondences $f$ and $g$ takes $[y]$ to $[g_{\kappa(X)}(y)] \in \CHW_0(Z_{\kappa(X)}, \detcotgb_{X/k}^{\vee})$. Note that the latter class corresponds to $h$.
\end{proof}

\begin{cor}
	\label{Cor_composition_rational_maps}
	For any two composable rational morphisms $f:X \dashrightarrow Y$ and $g: Y \dashrightarrow Z$ of smooth proper schemes, we have
	\begin{center}
		$[g\circ f]_*= [g]_* \circ [f]_*$.
	\end{center}

\end{cor}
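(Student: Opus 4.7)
The plan is to deduce this corollary directly by chaining the two results immediately preceding it. Let $f : X \dashrightarrow Y$ and $g : Y \dashrightarrow Z$ be the given composable rational morphisms of smooth proper $k$-schemes, and set $h = g \circ f : X \dashrightarrow Z$. The target identity involves only the pushforward maps on $A_0(-, M)$ attached to the classes $[f], [g], [h] \in \WRatCor(k)$, so the strategy is to first establish the identity at the level of correspondences and then transport it through the (contravariant) functor $X \mapsto A_0(X,M)$, $\alpha \mapsto \alpha_*$.

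First, I would invoke Corollary~\ref{Cor_composition_corr_rat_maps}: for composable rational morphisms of smooth proper schemes, the class of the composite equals the composite of the classes in $\WRatCor(k)$, that is,
\[
[h] = [g \circ f] = [g] \circ [f] \quad \text{in } \Hom_{\WRatCor(k)}(X,Z).
\]
Second, I would apply the functoriality of the pushforward established in the Remark after the construction of $\alpha_*$, which asserts that $(\alpha \circ \beta)_* = \alpha_* \circ \beta_*$ for any two composable Milnor-Witt rational correspondences. Specialising with $\alpha = [g]$ and $\beta = [f]$ yields
\[
[g \circ f]_* = \bigl([g] \circ [f]\bigr)_* = [g]_* \circ [f]_*,
\]
which is the desired equality of morphisms $A_0(Z,M) \to A_0(X,M)$.

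There is no serious obstacle: all of the technical content has already been absorbed into the two results being combined. The bulk of the work was in establishing (i) that $[f]$ as a rational correspondence is the image of the graph under the natural specialisation map, so that Lemma~\ref{Lem_pushforward_rational_map} applies and $[g] \circ [f] = [g \circ f]$ via Corollary~\ref{Cor_composition_corr_rat_maps}, and (ii) that the assignment $\alpha \mapsto \alpha_*$ factors through the composition law in $\WRatCor(k)$, which is a direct consequence of the theorem constructing the contravariant functor $\WRatCor(k) \to \ab$. Once these are in place, the corollary is a formal consequence.
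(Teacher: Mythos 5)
Your argument is correct and matches the paper's: the proof in the text simply cites Corollary~\ref{Cor_composition_corr_rat_maps}, implicitly combining it with the functoriality $(\alpha\circ\beta)_*=\alpha_*\circ\beta_*$ recorded in the preceding Remark, exactly as you spell out. No issues.
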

\begin{proof}
	This is a consequence of Corollary \ref{Cor_composition_corr_rat_maps}.
\end{proof}

\begin{thm}
	\label{thm_birational_invariance}
	The group $A_0(X,M)$ is a birational invariant of the smooth proper scheme $X$.
	\par In particular, the Chow-Witt group of zero-cycles $\CHW_0(X)$ is a birational invariant of the smooth proper scheme $X$.
\end{thm}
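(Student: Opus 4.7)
The plan is to show that any birational equivalence $f: X \dashrightarrow Y$ between smooth proper $k$-schemes induces an isomorphism $[f]_* : A_0(Y,M) \xrightarrow{\sim} A_0(X,M)$. First, I pick a rational inverse $g: Y \dashrightarrow X$, so that $g \circ f = \Id_X$ and $f \circ g = \Id_Y$ as rational maps (the two compositions are defined on a dense open subscheme and agree with the identity there).

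Next, by the construction recalled just before Lemma \ref{Lem_pushforward_rational_map}, each of these rational maps yields a morphism in $\WRatCor(k)$: we have $[f] \in \Hom_{\WRatCor(k)}(X,Y)$, $[g] \in \Hom_{\WRatCor(k)}(Y,X)$, and the classes $[\Id_X]$, $[\Id_Y]$ are the images under $\Xi$ of the diagonals $[\Delta_X] \in \CHW_{d_X}(X \times X, \detcotgb_{X/k}^{\vee})$ and $[\Delta_Y] \in \CHW_{d_Y}(Y \times Y, \detcotgb_{Y/k}^{\vee})$. Since the diagonal is the identity in $\WCor(k)$ (this is built into the cup-product composition law of the previous section) and since $\Xi : \WCor(k) \to \WRatCor(k)$ is a functor, $[\Id_X]$ and $[\Id_Y]$ are the identity morphisms of $X$ and $Y$ in $\WRatCor(k)$.

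Applying Corollary \ref{Cor_composition_corr_rat_maps} now gives $[g] \circ [f] = [g \circ f] = [\Id_X]$ and $[f] \circ [g] = [\Id_Y]$ in $\WRatCor(k)$. By the functoriality of the pushforward (Corollary \ref{Cor_composition_rational_maps} together with the remark preceding it), the maps $[f]_*$ and $[g]_*$ are mutually inverse, which yields the isomorphism $A_0(Y,M) \cong A_0(X,M)$ and hence the birational invariance. The special case $M = \KMW$ gives the birational invariance of $\CHW_0(X)$.

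Essentially the only nontrivial ingredient is the identification of $[\Id_X]$ with the categorical identity in $\WRatCor(k)$; once this is in place, the theorem is a formal consequence of the compositional compatibility of rational maps with rational correspondences. The heavy lifting has already been done in the compatibility lemmas \ref{Lem_theta_and_product_commute}, \ref{Lem2.2} and \ref{Lem_theta_and_pushforward_commute} of the previous subsection, which are precisely what makes the composition law on $\WRatCor(k)$ well defined.
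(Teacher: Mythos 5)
Your proposal is correct and follows essentially the same route as the paper, whose proof consists of the single remark that the theorem is an immediate consequence of Corollary \ref{Cor_composition_rational_maps}: you simply spell out the standard details (choice of a rational inverse, the identification of $[\Id_X]$ with the categorical identity of $X$ in $\WRatCor(k)$ via the diagonal class, and the resulting mutual inversibility of $[f]_*$ and $[g]_*$). The one point you make explicit that the paper leaves implicit --- that $[\Id_X]$ acts as the identity on $A_0(X,M)$ --- is indeed the only ingredient beyond the corollary, and your justification of it via the diagonal in $\WCor(k)$ and the functor $\Xi$ is the intended one.
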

\begin{proof}
	This is an immediate consequence of Corollary \ref{Cor_composition_rational_maps}.
\end{proof}

%
%\begin{rem}
%	\label{Rem_counter_example}
%	Unlike the classical Chow group, the group $\CHW_0(X)$ is not a birational invariant in general. Indeed, \cite[Cor. 95]{HXZ20} shows that the Chow-Witt group $\CHW_0(Q_n)$ of a split quadric $Q_n$ can be $\ZZ$ while $\CHW_0( \PP^n)= \GW(k)$ hence are different.
%\end{rem}

\begin{ex}
According to \cite[§5]{Fasel18bis}, we know that
$\CHW_0(\PP^n_k)=\GW(k)$ for any natural number $n$.
\par In particular, we recover the computations of $\CHW_0(Q_n)$ where $Q_n$ is an $n$-dimensional split quadric (see \cite[Corollary 9.5]{HXZ20}).
\end{ex}
\begin{ex}
	 If $M$ is $\operatorname{K}^M$ (the Milnor-Witt K-theory), then we recover the fact that the Chow group of zero-cycles $\CH_0(X)$ is a birational invariant of the smooth proper scheme $X$.
\end{ex}

\appendix
\section{Appendix}

\label{Section_recollections}

\subsection{Cohomological Milnor-Witt cycle modules}

\begin{df} 
	\label{def:cohMW}
	\begin{enumerate}
		%\item
		%Recall that a \textbf{graded line bundle} on a connected scheme $X$ is a pair $(L,n)$ where $L$ is a line bundle on $X$ and $n$ is an integer. A \textbf{morphism of graded line bundles} from $(L,n)$ to $(L',n')$ is simply a morphism of line bundles $L\to L'$ if $n=n'$, and if $n\neq n'$ we consider there to be no such morphism. The binary law $(L,n)\otimes(L',n')=(L\otimes L',n+n')$ makes the category of graded line bundles a \emph{Picard category}.
		
		\item
		If $S$ is a scheme, call an \textbf{$S$-field} the spectrum of a field essentially of finite type over $S$, and a \textbf{morphism of $S$-fields} an $S$-morphism between the underlying schemes. The collection of $S$-fields together with morphisms of $S$-fields defines a category which we denote by $\mathcal{F}_S$. We say that a morphism of $S$-fields is \textbf{finite} (resp. \textbf{separable}) if the underlying field extension is finite (resp. separable). 
		
		In what follows, we will denote for example $f:\Spec F\to\Spec E$ a morphism of $S$-fields, and $\phi:E\to F$ the underlying field extension.
		%Define the category $\mathcal{F}_X$ whose objects are of the form $(E,L,n)$, where $E$ is a field essentially of finite type over $X$ and $(L,n)$ is a graded line bundle over $E$; a morphism $\phi:(E,L,n)\to (F,L',n')$ is a morphism of $X$-fields $E\to F$ together with an isomorphism of graded line bundles $(L,n)\otimes_EF\simeq (L',n')$, that is, an isomorphism of line bundles $L\otimes_EF\simeq L'$ if $n=n'$, and if $n\neq n'$ there is no such morphism. We say that a morphism in $\mathcal{F}_X$ is \textbf{finite} (resp. \textbf{separable}) if the underlying field extension is finite (resp. seperable).
		
		%\item
		%Let $S$ be an %regular 
		%excellent scheme. 

		An \textbf{$S$-valuation} on an $S$-field $\operatorname{Spec}F$ is a discrete valuation $v$ on $F$ such that $\operatorname{Im}(\mathcal{O}(S)\to F)\subset\mathcal{O}_v$. We denote by $\kappa(v)$ the residue field, $\mathfrak{m}_v$ the valuation ideal and $N_v=\mathfrak{m}/\mathfrak{m}^2$.

		%
		%\fz{Here ``morphism of $S$-fields'' is always understood as schemes and not as algebras. This is because I really want to denote by $\phi^!$ the functor that appears by analogy with the Grothendieck duality, and not $\phi_!$.}
		%\fred{This is a good idea ! pro: it feels more natural from the A1-homotopical side. against: this does not correspond to Rost conventions, nor to the usual conventions on Milnor(-Witt) K-theory. I believe we just need to choose one convention (also for homological!) for all the paper! Similarly: these terminology and conventions (fields/trait and valuation on fields) should be transfer to Section 1, or better, the "notation" section and used also in Section 1.} \fz{J'ai fait une sorte de compromis dans les notations: je d\'enote $\operatorname{Spec}E$ uniquement pour d\'esigner un ``$S$-field'' comme sch\'ema, et je note par exemple $M(E)$ la valeur d'un module de cycle en ce corps.}

		\item
		Let $S$ be a scheme and let $R$ be a commutative ring with unit. An \textbf{$R$-linear cohomological Milnor-Witt cycle premodule} over $S$ is a functor from $\mathcal{F}_S$ to the category of $\ZZ$-graded $R$-modules
		\begin{align}
			\begin{split}
				M:(\mathcal{F}_S)^{op}&\to \operatorname{Mod}_R^{\ZZ}\\
				\operatorname{Spec}E&\mapsto M(E)
			\end{split}
		\end{align}
		%the data of a $R$-module $M(E,L,n)$ for every triple $(E,L,n)$, where
		%$E$ is a $S$-field, $L$ is a one-dimensional $E$-vector space, and $n$ is an integer, 
		for which we denote by $M_n(E)$ the $n$-the graded piece, together with the following functorialities and relations:
		
		\noindent\textbf{Functorialities:}
		\begin{description}
			\item [\namedlabel{itm:D1}{(D1)}] 
			For a morphism of $S$-fields $f:\operatorname{Spec}F\to \operatorname{Spec}E$ or (equivalently) $\phi:E \to F$, a map of degree $0$
			\begin{align}
				\label{eq:D1}
				f^*=\phi_*=\res_{F/E}:M(E)\to M(F);
			\end{align}
			
			\item [\namedlabel{itm:D3}{(D3)}] 
			For an $S$-field $\operatorname{Spec}E$ and an element $x\in \kMW_m(E)$, a map of degree $m$
			\begin{align}
				\gamma_x:M(E)\to M(E)
			\end{align}
			making $M(E)$ a left module over the lax monoidal functor $\kMW_?(E)$ (i.e. we have $\gamma_x\circ\gamma_y=\gamma_{x\cdot y}$ and $\gamma_1=\Id$).
		\end{description}
		
		The axiom~\ref{itm:D3} allows us to define, for every $S$-field $\operatorname{Spec}E$ and every $1$-dimensional $E$-vector space $\cL$, a graded $R$-module
		\begin{align}
			\label{eq:cohmorel}
			M(E,\cL):=M(E)\otimes_{R[E^\times]}R[\cL^\times]
		\end{align}
		where $R[\cL^\times]$ is the free $R$-module generated by the non-zero elements of $\cL$, and the group algebra $R[E^\times]$ acts on $M(E)$ via $u\mapsto\langle u\rangle$ thanks to~\ref{itm:D3}. 
		\begin{description}
			\item [\namedlabel{itm:D2}{(D2)}] 
			For a finite morphism of $S$-fields $f:\operatorname{Spec}F\to \operatorname{Spec}E$ or $\phi:E \to F$, a map of degree $0$ 
			\begin{align}
				\label{eq:D2}
				f_!=\phi^!= \cores_{F/E}:M(F,\detcotgb_{F/E})\to M(E);
			\end{align}
			
			\item [\namedlabel{itm:D4}{(D4)}] 
			For an $S$-field $\operatorname{Spec}E$ and an $S$-valuation $v$ on $E$, % with valuation ideal $\mathfrak{m}$ and residue field $\kappa$, %and $L$ a projective $\mathcal{O}_v$ module of rank $1$, 
			a map of degree $-1$
			\begin{align}
				\label{eq:D4}
				\partial_v:M(E)\to M(\kappa(v), N_v^\vee).
			\end{align}
			%where $N_v=$.
			
		\end{description}
		
		\noindent\textbf{Relations:} We refer to \cite[Definition 3.1]{Feld1} for the list of relations.
	\end{enumerate}

\end{df}

\begin{paragr} Fix $M$ a Milnor-Witt cycle premodule.
	If $X$ is any scheme, let $x,y$ be any points in $X$. We can define a map
	\begin{center}
		$\partial^x_y:M_q(\kappa(x),{\detcotgb}_{\kappa(x)/k}) \to M_{q-1}(\kappa(y),{\detcotgb}_{\kappa(y)/k})$
	\end{center}
	thanks to \ref{itm:D2} and \ref{itm:D4}.
\end{paragr}

\begin{df} (see \cite[Definition 4.2]{Feld1})
	
	A Milnor-Witt cycle module $M$ over $k$ is a Milnor-Witt cycle premodule $M$ which satisfies the following conditions \ref{itm:FD} and \ref{itm:C}.
	\begin{description}
		\item [\namedlabel{itm:FD}{(FD)}] {\sc Finite support of divisors.} Let $X$ be a normal scheme and $\rho$ be an element of $M(\xi_X,{}_X)$. Then $\partial_x(\rho)=0$ for all but finitely many $x\in X^{(1)}$.
		\item [\namedlabel{itm:C}{(C)}] {\sc Closedness.} Let $X$ be integral and local of dimension 2. Then
		\begin{center}
			$0=\displaystyle \sum_{x\in X^{(1)}} \partial^x_{x_0} \circ \partial^{\xi}_x: M(\kappa(\xi_X), \detcotgb_{\kappa(\xi_X)/k})\to M(\kappa(x_0),\detcotgb_{\kappa(x_0)/k})$
		\end{center}
		where $\xi$ is the generic point and $x_0$ the closed point of $X$.
	\end{description}
\end{df}

\begin{paragr}
	Let $M$ be a Milnor-Witt cycle module over $k$. We can form a (cohomological) Rost-Schmid cycle complex 
	$C_*(X,M,l)$ such that for any integer $p,q \in \ZZ$, and any line bundle $l$ over $X$:
	\begin{align}
		C_p(X,M_q,l):=\oplus_{X_{(p)}}M_{p+q}(\kappa(x),\detcotgb_{\kappa(x)/k} \otimes l_{|x}).
	\end{align}
	We denote by $A_i(X,M_q,l)$ is the homology of $C_*(X,M_q,l)$ in degree $i$. 
\end{paragr}

\begin{rem}
	Taking $M=\KMW$, we obtain
	$$
	A_i(X, M_{-i}, l) = \CHW_i (X, l )
	$$
	where the right-hand-side is known as the Chow-Witt group of $X$.
\end{rem}

\begin{paragr}
	\label{Cohomological basic maps}
	Fix $\cohM$ a Milnor-Witt cycle module and fix $X$ a $k$-scheme with a dimensional pinning.
	We recall the basic maps that one can define on the cohomological Rost-Schmid complex.
\end{paragr}

\begin{paragr}{\sc Pushforward}
	Let $f:Y\to X$ be a $k$-morphism of schemes. We have
	\begin{center}
		
		$f_*:C_p(Y,\cohM_q,l)\to {}C_{p}(X,\cohM_q, l)$
	\end{center}
	as follows. If $x=f(y)$ and if $\kappa(y)$ is finite over $\kappa(x)$, then $(f_*)^y_x=\cores_{\kappa(y)/\kappa(x)}$. Otherwise, $(f_*)^y_x=0$.
\end{paragr}

\begin{paragr}{\sc Pullback} \label{CohomologicalpullbackBasicMap}
	Let $f:Y\to X$ be an {\em essentially smooth} morphism of schemes of relative dimension $s$. Suppose $Y$ connected. Define
	\begin{center}
		$f^!:{}C_p(X,\cohM_q,l) \to {}C_{p+s}(Y,\cohM_{q-s},l\otimes \detcotgb_f^{\vee})$
	\end{center}
	as follows. If $f(y)=x$, then $(f^!)^x_y= \res_{\kappa(y)/\kappa(x)}$. Otherwise, $(f^!)^x_y=0$. If $Y$ is not connected, take the sum over each connected component.

\end{paragr}

\begin{paragr}{\sc Multiplication with units}
	Let $a_1,\dots, a_n$ be global units in $\mathcal{O}_X^*$. Define
	\begin{center}
		$[a_1,\dots, a_n]:
		{}C_p(X,\cohM_q,l) \to 
		{}C_p(X,\cohM_{q+n},l)$
	\end{center}
	as follows. Let $x$ be in $X_{(p)}$ and $\rho\in \hM(\kappa(x),*)$. We consider $[a_1(x),\dots, a_n(x)]$ as an element of ${\KMW (\kappa(x))}$.
	If $x=y$, then put $[a_1,\dots , a_n]^x_y(\rho)=[a_1(x),\dots , a_n(x)]\cdot \rho) $. Otherwise, put $[a_1,\dots , a_n]^x_y(\rho)=0$.

\end{paragr}

\begin{paragr}{\sc Multiplication with $\eta$}
	Define
	\begin{center}
		
		$\eta:
		{}C_p(X,\cohM_q,l)
		\to {}C_p(X,\cohM_{q-1},l)$
	\end{center}
	as follows. If $x=y$, 
	then $\eta^x_y(\rho)
	=\gamma_{\eta}(\rho)$. 
	Otherwise, $\eta^x_y(\rho)=0$.
	
\end{paragr}

\begin{paragr}{\sc Boundary maps} \label{CohomologicalBoundaryMaps}
	Let $X$ be a scheme of finite type over $k$, let $i:Z\to X$ be a closed immersion and let $j:U=X\setminus Z \to X$ be the inclusion of the open complement. We have a map
	\begin{center}

		$\partial=\partial^U_Z:
		{}C_{p}(U,\cohM_q,*) 
		\to {}C_{p-1}(Z,\cohM_q,*)$.
	\end{center}
	which is called the boundary map for the closed immersion $i:Z\to X$.
	
\end{paragr}

\begin{paragr}
	A pairing $N \times M \to P$ between MW-cycle modules is given by maps
	\begin{center}
		$M_p(E,l) \otimes N_q(E,l') 
		\to 
		P_{p+q}(E,l\otimes l')$
	\end{center}
	which are compatible with the data \ref{itm:D1},..., \ref{itm:D4} (see \cite[Definition 3.21]{Feld1} for more details).
\end{paragr}

\begin{paragr}{\sc Product} If $M\times N \to P$ is a pairing of Milnor-Witt cycle modules, then there is a product map
	\begin{center}
		$C_p(X,M_q,l)
		\times 
		C_{r}(Y, N_s, l')
		\to 
		C_{p+r}(X\times Y, P_{q+s}, l \otimes l')
		$
	\end{center}
	where $X,Y$ are smooth schemes over $k$ (see also \cite[§11]{Feld1}).
\end{paragr}

\begin{rem}
	The previous basic maps commute with the differentials of the Rost-Schmid complex and thus induce morphisms on the homology.
\end{rem}

\subsection{Oriented schemes}

\begin{paragr}
	\label{paragr_quadratic_iso}
	The notion of oriented real vector bundles was extended to the algebraic setting by Barges-Morel in \cite{BargeMorel}.
	We introduce a new category of oriented schemes. We refer to \cite[Appendix §6.1]{DDO} for similar results.
	
\end{paragr}

\begin{df}
	\label{def_oriented_scheme}
	Let $X/S$ be a scheme. An orientation of $X$ is an isomorphism $\sigma : \omega_{X/S} \to l_X^{\otimes 2}$, where $l_X$ is an invertible sheaf over $X$.
	\par An oriented $S$-scheme $(X,\sigma_X:\omega_{X/S} \to l_X^{\otimes 2})$ is the data of a scheme $X/S$ and an orientation $\sigma_X: \omega_{X/S} \to l_X^{\otimes 2}$.
	\par A morphism of oriented schemes $(Y, \sigma_Y:\omega_{Y/S} \to l_Y^{\otimes 2}) \to (X, \sigma_X: \omega_{X/S} \to l_X^{\otimes 2})$ is the data of an $S$-morphism $f:Y \to X$ along with an isomorphism of invertible sheaves
	$l_Y^{\otimes 2} \simeq f^{-1} l_X^{\otimes 2} \otimes \omega_f$ which makes the following diagram
	\begin{center}
		$\xymatrix{
			\omega_{Y/S}
			\ar[r]^-{\simeq}
			\ar[d]^-{\sigma_Y}
			&
			f^{-1} \omega_{X/S} \otimes \detcotgb_f
			\ar[d]^-{ \sigma_X \otimes \Id_{\detcotgb_f}}
			\\
			l_Y^{\otimes 2}
			\ar[r]^-{\simeq}
			&
			f^{-1} l_X^{\otimes 2} \otimes \detcotgb_f
		}
		$
		
	\end{center}
	commutative. 
	\par Denote by $\orsch$ the category of oriented schemes (along with morphisms of oriented schemes).
\end{df}

\begin{rem}
	Let $(X,\sigma_X:\omega_{X/S} \to l_X^{\otimes 2})$ be an oriented scheme. By abuse of notation, we omit the orientation and simply write $X$.
\end{rem}

\bibliographystyle{amsalpha}
\bibliography{MW}

\end{document}